\documentclass[aos,preprint]{imsart}
\RequirePackage[OT1]{fontenc}
\RequirePackage{amsthm,amsmath}
\RequirePackage[numbers]{natbib}
\usepackage{pictexwd}
\usepackage{bm,amssymb,graphicx,url}
\usepackage{color}
\usepackage{epstopdf}
\usepackage[normalem]{ulem}

\usepackage[colorlinks,citecolor=blue,urlcolor=blue]{hyperref}

\startlocaldefs    

\def\a{\alpha}

\def\R{\mathbb R}

\def\E{{\mathbb E}}
\def\F{{\mathbb F}}

\def\l{\lambda}
\def\labda1{\lambda_1}
\def\labda2{\lambda_2}
\def\m{\mu}

\def\e{\varepsilon}
\def\f{\phi}

\def\s{\sigma}

\def\comment#1{\relax}

\def\=in{\mathop{\rm =}}

\numberwithin{equation}{section}
\theoremstyle{plain}
\endlocaldefs

\usepackage{amsmath,here,amsfonts,latexsym,graphicx,here}
\usepackage{amsthm,curves}
\usepackage{graphicx}
\usepackage{caption}
\usepackage{subcaption}
\usepackage{float}

\def\a{\alpha}

\def\e{\epsilon}
\def\l{\lambda}
\def\m{\mu}

\def\f{\phi}

\def\s{\sigma}

\def\cS{\mathcal{S}}

\def\F{{\mathbb F}}

\def\Q{{\mathbb Q}}
\def\R{{\mathbb R}}

\def\a{\alpha}
\def\m{\mu}

\newtheorem{theorem}{Theorem}
\newtheorem{lemma}{Lemma}

\newtheorem{remark}{Remark}

\begin{document}
\begin{frontmatter}
\title{Estimation of the incubation time distribution in the singly and doubly interval censored model}
\runtitle{MLE for the incubation time distribution}

\begin{aug}
\author{\fnms{Piet} \snm{Groeneboom}\corref{}\ead[label=e1]{P.Groeneboom@tudelft.nl}}
\runauthor{P.\ Groeneboom}
\affiliation{Delft University of Technology}
\address{Delft University of Technology, Mekelweg 4, 2628 CD Delft,
	The Netherlands.\\ 
	\printead{e1}} 
\end{aug}

\begin{abstract}
We analyze nonparametric estimators for the distribution function of the incubation time in the singly and doubly interval censoring model. The classical approach is to use parametric families like Weibull, log-normal or gamma distributions in the estimation procedure. We propose nonparametric estimates for functions of the observations, which stay closer to the data than the classical parametric methods.
We also give explicit limit distributions for discrete versions of the models and apply this to compute confidence intervals.  The methods complement the analysis of the continuous model in \cite{piet:21} and \cite{piet:23}. {\tt R} scripts for computation of the estimates are provided in \cite{github:20}.
\end{abstract} 

\begin{keyword}[class=AMS]
\kwd[Primary ]{62G05}
\kwd{62N01}
\kwd[; secondary ]{62-04}
\end{keyword}

\begin{keyword}
\kwd{incubation time, confidence intervals, Fisher information, support reduction, single interval censoring, double interval censoring, deconvolution}
\end{keyword}

\end{frontmatter}

\section{Introduction}
\label{section:intro}
In the statistical analysis of the behavior of an infectious disease, one usually has to deal with events that are not directly observable. As an example, at the start of the Covid-19 pandemic, the so-called effective reproductive number $R_e$ played an important role (``Is it bigger or smaller than 1?"). The estimation of $R_e$ faces the difficulty that infection events can usually not be observed and that therefore a deconvolution step is necessary, see, e.g., \cite{maathuis:22}

For the estimation of the distribution of the incubation time one also faces the difficulty just mentioned.
 In this case one has an observation interval $[E_L,E_R]$ which is known to contain the time of infection $I$ and a time $S$ where the person becomes symptomatic. The incubation time is then given by $S-I$.

 Following \cite{tom_gianpi:19} we set the left endpoint of the exposure interval $[E_L,E_R]$ equal to zero (``looking back''). Our observations then consist of the pair of the (lengths of the) exposure time $E$ and the time of getting symptomatic $S$
$$
(E,S),
$$
where $S=I+U$,  $I$ is the infection time (also shifted for taking $E_L=0$) and $U$ the (length of the) incubation time (which does not have to be shifted). 
The times $I$ and $U$ are assumed independent, given $E$, and are not observable.

To make the distribution function $F$ of the incubation time $U$ identifiable, one has to make an assumption on the distribution function of the infection time. 
It is usually assumed  that that the time till infection is uniformly distributed on the interval $[0,E]$, conditionally on the length of the exposure time $E$. The model is for example considered in \cite{reich:09}, \cite{tom_gianpi:19}, \cite{backer:20} and \cite{piet:21}.
It is also possible to let the infection time have another distribution on $[0,E]$, but we have to make an assumption on this to make the distribution of the infection time identifiable. In the present paper we will assume that the distribution is uniform on $[0,E]$.

We define the (convolution) density $q_F$ of $(E,S)$ by
\begin{align}
\label{convolution}
q_F(e,s)&=e^{-1}\{F(s)-F(s-e)\}=e^{-1}\int_{u=(s-e)_+}^s \,dF(u),\qquad e>0,\,s\ge0.
\end{align}
w.r.t.\ $\m$, which is the product of the measure $dF_E$ of the exposure time $E$ and Lebesgue measure. The distribution function $F$ satisfies $F(x)=0$ for $x\le0$. So the underlying measure $Q_F$ for $(E,S)$ is defined by
\begin{align}
\label{def_Q_0}
dQ_F(e,s)=q_F(e,s)\,ds\,\,dF_E(e),\qquad e\in(0,M_2],\,\qquad s\ge0,
\end{align}
where $M_2<\infty$ is the upper bound of the support of $E$.

It seems reasonable to assume that the underlying distribution function $F_0$ of the incubation time is absolutely continuous, with density $f_0$, and that $E$ has no mass on an interval $[0,\e)$, for some $\e>0$. Observations of this type are shown schematically in Figure \ref{fig:picture_single}. Note that if $S<E$, one usually puts $S=E$, since then $E$ is no longer relevant for the estimation.

\begin{figure}[!ht]
\centering
\includegraphics[width=0.8\textwidth]{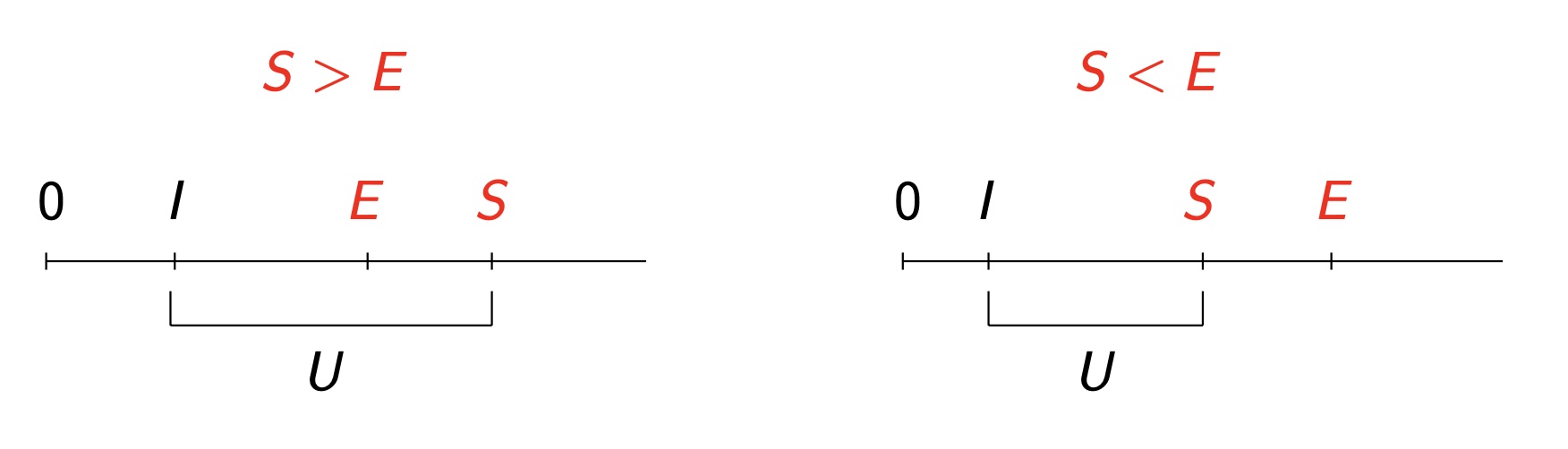}
\caption{Singly interval censored data. $E$ is the end of the exposure time, $S$ the time of becoming symptomatic, $I$ infection time and $U$ the (length of the) incubation time. We only can observe $E$ and $S$.}
\label{fig:picture_single}
\end{figure}

In practice,  the variables $S=I+U$ are usually rounded, taking the ceil of $S$, to integers (days), in which case the log likelihood for one observation becomes, conditionally on the values of $E$,
\begin{align}
\label{loglikehood2}
\log \int_{s=\lfloor S\rfloor}^{\lceil S\rceil}\,q_F(E,s)\,ds=\log \left\{E^{-1}\int_{s=\lfloor S\rfloor}^{\lceil S\rceil}\{F(s)-F(s-E)\}\,ds\right\},
\end{align}
where  $\lfloor S \rfloor$ and $\lceil S\rceil$ are the floor and ceil of $S$, respectively. Note that $S$ itself is assumed to have a continuous distribution and that $S$ therefore lies with probability one strictly between two consecutive integers, so we integrate over an interval of length 1. In this case, where we do not have more precise information on the values of the time of getting symptomatic, we call the model {\it doubly censored}: we only have an interval for the beginning of the incubation time (time of getting infected) and an interval (the 24 hours of a day) for the time of getting symptomatic. If we would have more precise information about the time of getting symptomatic, so that we can trat $S$ as a continuous (observable) random variable, we would call the model {\it singly censored}. Theory for nonparametric analysis of this model is developed in \cite{piet:23}.

Usually the exposure times $E$ are also only known as number of days, so represented by integers, as we will do in the sequel.  In principle we  can also consider a continuous exposure time $E$, but we will not do this to avoid an overcomplicated model.
So instead of the parameters $F_0(S)$ and $F_0(S-E)$, we consider estimating the parameters
\begin{align}
\label{parameters_discretized}
\bar F_0(\lceil S\rceil)=\int_{s=\lfloor S\rfloor}^{\lceil S\rceil}F_0(s)\,ds,\quad\text{ and }\quad \bar F_0(\lceil S\rceil-E)=\int_{s=\lfloor S\rfloor}^{\lceil S\rceil} F_0(s-E)\,ds,
\end{align}
where $E\in\mathbb N$, and where we use the notation $F_0$ to denote the underlying distribution function.

For a sample $(E_1,S_1), \dots,(E_n,S_n)$, where the $S_i$ are integers, and represent the ceils in (\ref{parameters_discretized}), we get the log likelihood
\begin{align}
\label{loglikehood3}
\ell(F)=\sum_{i=1}^n \log \{\bar F(S_i)-\bar F(S_i-E_i)\},
\end{align}
for distribution functions $F$, where we assume that the $E_i$ are also integers.

To give a specific example, we consider the following parametric model. The incubation time $U$ has a truncated exponential distribution function $F_a$, defined by
\begin{align*}
F_a(x)=\frac{1-\exp(-x/a)}{1-\exp(-M_1/a)}\,1_{[0,M_1]}(x) +1_{(M_1,\infty)}(x),\qquad x\in\R,
\end{align*}
for some constants $M_1,a>0$. There is no reason to use this distribution as a model for the incubation time, but neither is there a reason for using the Weibull, log-normal or gamma distributions, which are mostly used. The advantage of taking the exponential distribution in our example is that the integral in (\ref{loglikehood2}) has a simple form, in contrast with the integrals of the latter distribution functions.

Now, once we have fixed the upper bound for the incubation time $M_1$ (something that is needed in the nonparametric approach, but it also does not sound unrealistic to assume that such an upper bound exists), for example $M_1=15$, the parametric estimation of the distribution function of the incubation time boils down to the maximization of the function
\begin{align}
\label{parametric_log_likelihood}
\ell(F_a)=\sum_{i=1}^n \log \int_{s=\lfloor S_i\rfloor}^{\lceil S_i\rceil}\left\{F_a(s)-F_a(s-E_i)\right\}\,ds,
\end{align}
over $a>0$, where
\begin{align}
\label{F_a}
F_a(x)=\left\{\begin{array}{lll}
0 &, x<0,\\
\left(1-e^{-x/a}\right)/\left(1-e^{-M_1/a}\right)\qquad &, x\in[0,M_1],\\
1 &, x>M_1.\\
\end{array}
\right.
\end{align}

If $E_i<S_i$, the integrals $\int_{s=\lfloor S_i\rfloor}^{\lceil S_i\rceil}\left\{F_a(s)-F_a(s-E_i)\right\}\,ds$ are of the form
\begin{align}
\label{explicit_parametric_log_likelihood}
&\int_{s=k-1}^{k}\left\{F_a(s)-F_a(s-E_i)\right\}\,ds\nonumber\\
&=(1-a\exp(-k/a)(\exp(1/a)-1))/(1-\exp(-M_1/a)))\nonumber\\
&\qquad\qquad-(1-a\exp(-(k-j)/a)(\exp(1/a)-1))/(1-\exp(-M_1/a))
\end{align}
(note that the $E_i$ are assumed to be integers). If $E_i\ge S_i$ we get:
\begin{align}
\label{explicit_parametric_log_likelihood2}
&\int_{s=k-1}^{k}\left\{F_a(s)-F_a(s-E_i)\right\}\,ds=\int_{s=k-1}^{k}F_a(s)\,ds\nonumber\\
&=(1-a\exp(-k/a)(\exp(1/a)-1))/(1-\exp(-M_1/a))).
\end{align}

So maximization of $\ell(F_a)$ in (\ref{parametric_log_likelihood}) boils down to maximization of sums of logarithms of expressions of the form (\ref{explicit_parametric_log_likelihood}) and (\ref{explicit_parametric_log_likelihood2}) as a function of the parameter $a$. We used the {\tt R} package {\tt nloptr} for this maximization, which is an interface to the non-linear optimization package \cite{NLopt} (written in {\tt C}); the R script for our particular optimization problem can be found in \cite{github:20}.

On the other hand, the nonparametric maximum likelihood estimator maximizes
\begin{align}
\label{nonparametric_log_likelihood}
\ell_2(\bar F)&=\sum_{i=1}^n \log \int_{s=\lfloor S_i\rfloor}^{\lceil S_i\rceil}\left\{F(s)-F(s-E_i)\right\}\,ds\nonumber\\
&=\sum_{i=1}^n \log\left\{\bar F({\lceil S_i\rceil})-\bar F({\lceil S_i\rceil}-E_i)\right\}
\end{align}
over {\it all} distribution functions $F$, or, alternatively, all discrete distribution functions $\bar F$, only having jumps at the integers $i$. Since this is a maximization over a much wider class of functions, we cannot expect it to be as good as the parametric estimate for its own parametric model.

To get a feel for their relative performances, we show the box plots of the nonparametric and parametric estimates of the distribution function at the point 6 (``6 days'') for this particular parametric model. It is seen in Figure \ref{figure:boxplot500}(a) that both estimates are well on target, but that the nonparametric estimate has a bigger variance. This is to be expected, since the nonparametric estimate does not presuppose this particular parametric model, as the parametric estimate in fact does. Figure \ref{figure:boxplot500}(b) shows what happens if we take for the parametric estimate the value of $F_{\hat a}(6)$ instead of the integral $\int_5^6 F_{\hat a}(x)\,dx$, where $\hat a$ is the parametric estimate of the parameter $a$, resulting from the application of {\tt nloptr}.

\begin{figure}[!ht]
\begin{subfigure}[b]{0.45\textwidth}
\includegraphics[width=\textwidth]{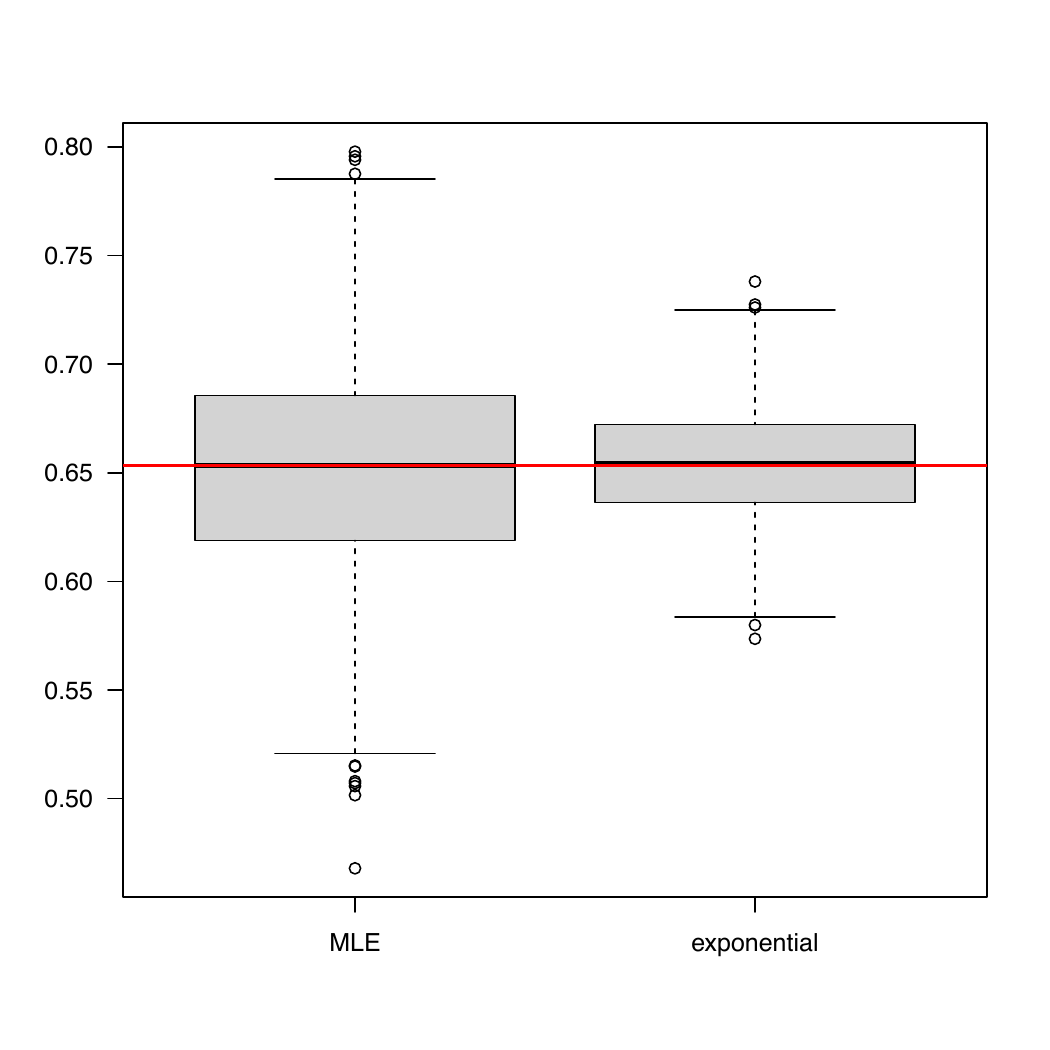}
\caption{}
\label{fig:percentages500_Weibull}
\end{subfigure}
\begin{subfigure}[b]{0.45\textwidth}
\includegraphics[width=\textwidth]{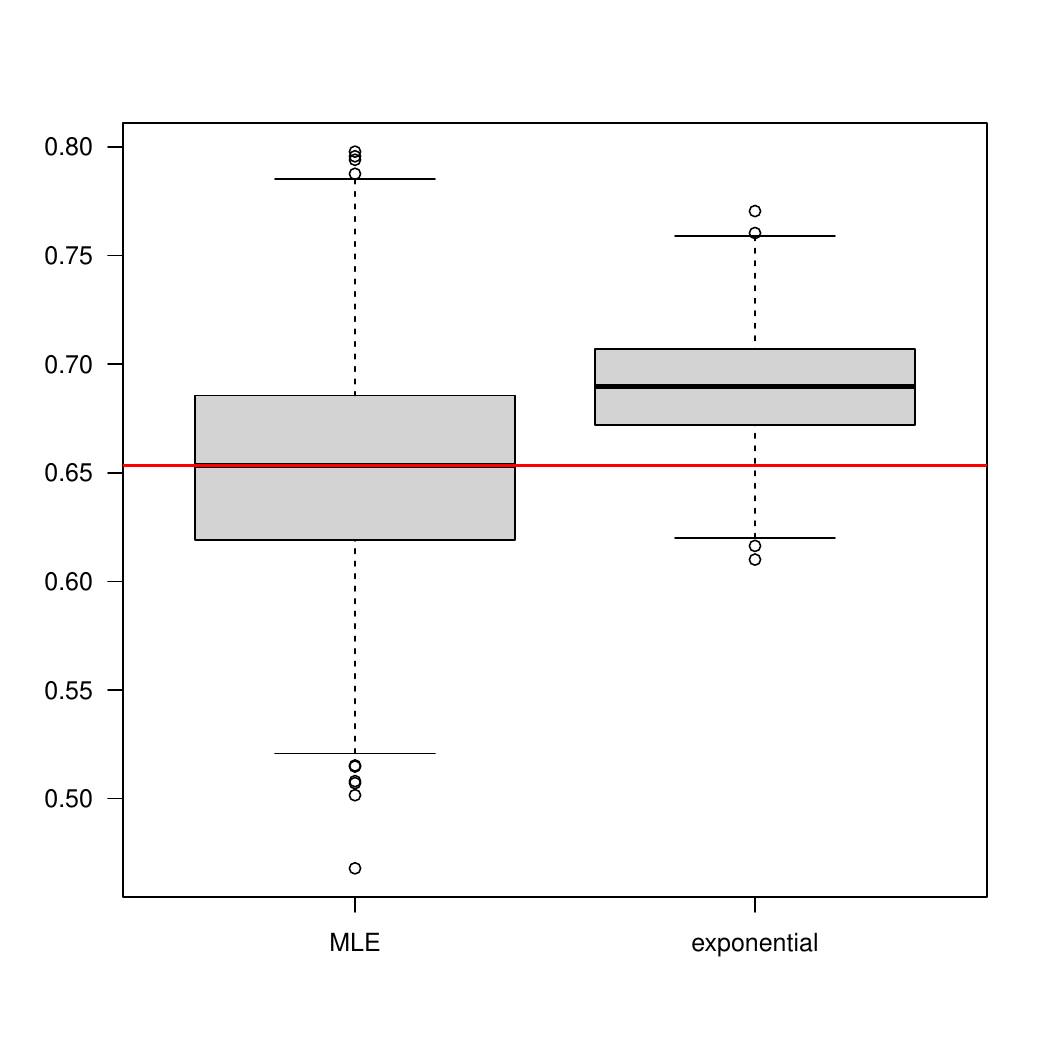}
\caption{}
\label{fig:percentages1000_Weibull}
\end{subfigure}
\caption{(a) Box plot of  $1000$ nonparametric and parametric estimates of $\int_5^{6} F_a(x)\,dx$, where $F_a$ is the truncated exponential distribution function, defined by (\ref{F_a}), with $a=6$, and where the time variables of the sample are rounded to the nearest upper integer. (b) Box plot of  $1000$ nonparametric  estimates of $\int_5^{6} F_a(x)\,dx$ and  $1000$ parametric  estimates of $F_a(6)$, where the time variables are rounded in the same way in the samples.\\
In both cases, the red line segment shows the value of the real $\int_5^6 F_a(x)\,dx$.}
\label{figure:boxplot500}
\end{figure}

The doubly interval censored model, for which the intervals, containing the time of getting symptomatic, are longer than one day, is considered in \cite{lauer:20}.
In this case there is again an interval $[E_L,E_R]$ for the infection time and an interval $[S_L,S_R]$ for the time of becoming symptomatic. One can, just as in \cite{piet:21}, shift the data in such a way that $E_L=0$, which leaves us with three numbers: the time $E$ (``length of Exposure time'') and the times $S_L$ and $S_R$, adapted for the shifting of $E_L$ to zero.
Denoting the (real) time of becoming symptomatic by $S$, we have that $S$ is the sum of the the infection time $I$ and the incubation time $U$. We also assume, conditionally on the exposure time $E$, that $I$ and $U$ are independent and that the time of becoming infected is uniformly distributed on the interval $[0,E]$
Typical schematic pictures of the doubly interval censored model are shown in Figure \ref{fig:picture_double} for two different situations for the interval $[S_L,S_R]$, containing $S$.

\begin{figure}[!ht]
\centering
\includegraphics[width=0.8\textwidth]{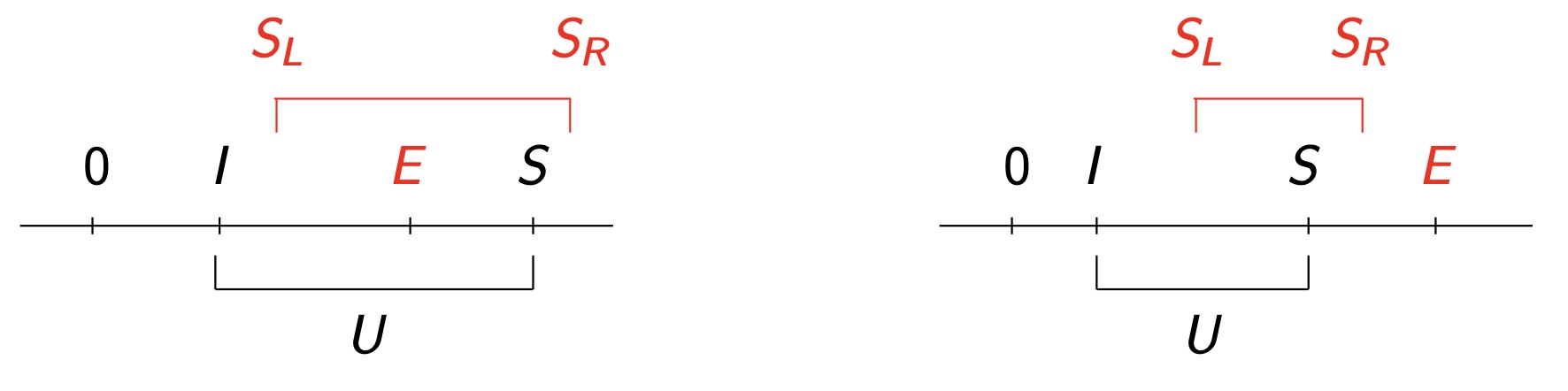}
\caption{Doubly interval censored data. $S$ the time of becoming symptomatic, $I$ infection time and $U$ the (length of the) incubation time. We can only observe $E$ and the interval $[S_L,S_R]$, containing $S$.}
\label{fig:picture_double}
\end{figure}

In this case the log likelihood for one observation is of the form
\begin{align}
\label{log_likelihood_double}\log \left[\F(S_R)-\F(S_L)-\F(S_R-E)+\F(S_L-E)\right],
\end{align}
neglecting parts not affecting the maximization problem, where $\F(u)=\int_0^u F(x)\,dx$, $u>0$.
Such an analysis if given in \cite{lauer:20}. The analysis in \cite{lauer:20} is parametric, considering the log-normal, gamma, Weibull and Erlang distributions as options for the incubation time distribution.

\subsection{Outline of the paper}
\label{subsection:outline}
One can restrict consideration to the estimation of parameters $\bar F_0(i)$ of type (\ref{parameters_discretized}).
We discusss this nonparametric model  in Section \ref{section:discrete}. If one only considers a finite number of parameters of this type, one can specify the asymptotic (normal) distribution, using the Fisher information matrix The rate of convergence of the estimates is $\sqrt{n}$. The results are given in Theorems \ref{th:local_limit_discrete} and  \ref{th:local_limit_discrete_double} of Section \ref{section:discrete}.

Computation of the nonparametric MLE is non-trivial. We discuss this in Section \ref{section:computation}, where the support reduction is introduced for computing the MLE in both models.
In section \ref{section:conf_int} we discuss the construction of confidence intervals for both models. Proofs are given in the appendix.

\section{The nonparametric model}
\label{section:discrete}
We consider the estimation the parameters $\bar F_0(i)$, defined by (\ref{parameters_discretized}). 
The first question is whether the MLE is consistent for these parameters. The answer is affirmative, under some conditions on the distribution function $F_E$ and $F_0$, as the following lemma shows. Here and the rest of the paper, we assume that the $S_i$ and $E_i$ are integers.

\begin{lemma}[Consistency of $\hat F_n$ for $\bar F_0$]
\label{lemma:consistency_single}
Let $\bar F_0$ have strictly positive mass at all points $1,2,\dots,M_1$, where $M_1$ is a integer such that $\bar F_0(i)=1$, for $i>M_1$. Moreover, let $F_E$ have positive mass on all points $1,2,\dots,M_2$, where $M_2>M_1/2$. Then $\hat F_n$ is a consistent estimate of $\bar F_0$.
\end{lemma}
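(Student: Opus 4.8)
The plan is to recast the problem as a finite-dimensional $M$-estimation problem and then run a Wald-type consistency argument, with the real work hidden in an identifiability (deconvolution) step. Since $S_i,E_i\in\mathbb N$, $\bar F(j)=0$ for $j\le0$, and $\bar F(j)=1$ for $j>M_1$, the log likelihood $\ell_2$ in (\ref{nonparametric_log_likelihood}) depends on $F$ only through the vector $\theta=(\bar F(1),\dots,\bar F(M_1))$, which ranges over the compact convex set
\[
\Theta=\{\theta:\ 0\le\theta_1\le\cdots\le\theta_{M_1}\le1\}.
\]
Write $p_\theta(e,s)=P(E=e)\,e^{-1}\{\bar F(s)-\bar F(s-e)\}$ for the induced probability of the cell $(e,s)$; this is an affine function of $\theta$. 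Since $E$ and $S$ take only finitely many values, the data are summarized by the empirical cell frequencies $\hat q_n(e,s)$, which converge a.s.\ to $q_0(e,s):=p_{\theta_0}(e,s)$ by the law of large numbers, where $\theta_0=(\bar F_0(1),\dots,\bar F_0(M_1))$ is the true parameter.

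First I would record that, up to an additive constant not depending on $\theta$, $\tfrac1n\ell_2$ equals $K_n(\theta):=\sum_{(e,s)}\hat q_n(e,s)\log p_\theta(e,s)$, and that $K_n\to K(\theta):=\sum_{(e,s)}q_0(e,s)\log p_\theta(e,s)$ pointwise. Because $p_\theta$ is affine, each $\theta\mapsto\log p_\theta(e,s)$ is concave and upper semicontinuous (equal to $-\infty$ where $p_\theta=0$), so $K_n$ and $K$ are concave; for concave criteria, pointwise convergence together with a lower bound $K_n(\hat\theta_n)\ge K_n(\theta_0)\to K(\theta_0)$ and upper semicontinuity forces every subsequential limit of the maximizers $\hat\theta_n$ into $\{K>-\infty\}$ and onto an argmax of $K$, which cleanly sidesteps the $\log0=-\infty$ issue. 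The information inequality gives $K(\theta_0)-K(\theta)=\sum q_0\log(q_0/p_\theta)=\mathrm{KL}(q_0\|p_\theta)\ge0$, with equality iff $p_\theta(e,s)=p_{\theta_0}(e,s)$ for every cell with $q_0(e,s)>0$. Hence $\theta_0$ maximizes $K$, and the whole statement reduces to showing this maximizer is \emph{unique}: if $p_\theta=p_{\theta_0}$ on the support of $Q_{F_0}$, then $\theta=\theta_0$.

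The crux is this identifiability step, and it is exactly where $M_2>M_1/2$ enters. By assumption $E=M_2$ has positive mass, and $\bar F_0$ is strictly increasing on $\{1,\dots,M_1\}$, so every cell $(M_2,s)$ with $\bar F_0(s)>\bar F_0(s-M_2)$ lies in the support. For $1\le s\le M_2$ we have $s-M_2\le0$, so matching $p_\theta(M_2,s)=p_{\theta_0}(M_2,s)$ gives $\bar F(s)=\bar F_0(s)$ directly. I would then induct: for $M_2<s\le M_1$ the point $s-M_2$ satisfies $1\le s-M_2\le M_1-M_2<M_2$ (using $M_1<2M_2$), so $\bar F(s-M_2)=\bar F_0(s-M_2)$ is already known, and matching the cell $(M_2,s)$ yields $\bar F(s)-\bar F(s-M_2)=\bar F_0(s)-\bar F_0(s-M_2)$, whence $\bar F(s)=\bar F_0(s)$. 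The integrality of $M_1,M_2$ makes $M_2>M_1/2$ equivalent to $2M_2>M_1$, which is precisely what guarantees the induction reaches $s=M_1$; were $M_2\le M_1/2$, the values $\bar F(s)$ for $2M_2<s\le M_1$ could not be disentangled from a single deconvolution step and identifiability would fail. Combined with the argmax-stability of the concave criteria, this uniqueness delivers $\hat\theta_n\to\theta_0$ a.s., i.e.\ consistency of $\hat F_n$ for $\bar F_0$.

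I expect the main obstacle to be making the limit argument airtight despite $\ell_2$ taking the value $-\infty$ when a candidate $\theta$ assigns zero probability to an observed cell; exploiting concavity and upper semicontinuity, rather than a naive continuity or Wald-envelope argument, is what I would rely on to control this. A secondary point needing care is confirming that the reduction to the finitely many cells $(e,s)$ loses no information and that the hypothesis of strictly positive mass at $1,\dots,M_1$ genuinely yields a strictly increasing $\bar F_0$, which is what powers the deconvolution recursion above.
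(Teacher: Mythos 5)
Your proposal is correct in its essentials, but it follows a genuinely different route from the paper's. The paper never reduces the problem to a finite multinomial family with a Wald/Kullback--Leibler argument. Instead it uses the variational characterization of the MLE: since $\hat F_n$ maximizes the log likelihood, the one-sided directional derivative in the direction of $F_0$ is nonpositive, which yields $\int \{F_0(s)-F_0(s-e)\}/\{\hat F_n(s)-\hat F_n(s-e)\}\,d\Q_n(e,s)\le 1$. It then extracts a subsequential limit $F$ of $\hat F_n$ by Helly's compactness theorem, adds the normalization identity $\int\sum_{i}e^{-1}\{F(i)-F(i-e)\}\,dF_E(e)=1$ (which rests on the preliminary observation that $\hat F_n(i)=1$ for $i>M_1$), and applies the inequality $a^2/b+b\ge 2a$, with equality iff $a=b$, to carefully chosen terms: for $j\le M_1/2$ a cell with $j-e<0$, so that the term anchors at $F(j)$ itself, and for $j>M_1/2$ a cell with $j+e>M_1$, so that it anchors at $1-F(j)$. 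That is exactly where $M_2>M_1/2$ enters for the paper, playing the same role as in your one-step recursion from the cells with $e=M_2$: every $j\in\{1,\dots,M_1\}$ is tied directly to one of the two known boundary values $0$ and $1$. Your route (empirical cell frequencies, concavity and upper semicontinuity to control $\log 0$, KL identification, explicit deconvolution recursion) is more elementary and self-contained; the paper's argument is built to parallel the continuous-model proof in \cite{piet:23} and avoids any explicit finite-dimensional parametrization.

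Two caveats, neither fatal. First, your opening reduction asserts that the likelihood depends on $F$ only through $(\bar F(1),\dots,\bar F(M_1))$ ``since $\bar F(j)=1$ for $j>M_1$''; but a candidate $F$ in the class over which $\hat F_n$ maximizes need not satisfy this, so it must be argued --- the paper does it in one line: the values $F(j)$ for $j>M_1$ enter the likelihood only through the positive terms $F(S_i)$, because $S_i-E_i\le M_1$ always, so any maximizer takes the value $1$ there. Without this step the normalization $\sum_{(e,s)}p_\theta(e,s)=1$ behind your KL inequality is unavailable (though the sub-probability version $\sum q_0\log(q_0/p_\theta)\ge 1-\sum p_\theta\ge 0$ would still rescue the argument). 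Second, your parenthetical claim that identifiability would \emph{fail} when $M_2\le M_1/2$ is incorrect: since $F_E$ is assumed to charge \emph{all} of $1,\dots,M_2$, matching the cells with $e=1$ alone gives $\bar F(s)-\bar F(s-1)=\bar F_0(s)-\bar F_0(s-1)$ for every $s\le M_1$, and telescoping from $\bar F(0)=0$ identifies the whole vector. So $M_2>M_1/2$ is sufficient --- and it is what both your anchoring recursion and the paper's term-by-term argument actually exploit --- but it is not necessary for identifiability. Neither caveat undermines your main line of reasoning.
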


\begin{proof}
The proof parallels the proof for the continuous model in \cite{piet:23}.

The first observation is that $\hat F_n(i)=1$ if $i>M_1$, since then $i>S_j-E_j$ for all pairs $E_j,S_j$ for which $E_j>0$, so the log likelihood becomes largest by putting $F(i)=1$ (there is no compensating $-F(S_j-E_j)$, such that $i\le S_j-E_j$).

Let $\ell(F)$ be defined by
\begin{align*}
\ell(F)=\int \log\{F(s)-F(s-e)\}\,d\Q_n(e,s),
\end{align*}
for distribution functions $F$ on $\R$, which satisfy $F(x)=0$, $x\le0$, where $\Q_n$ is the empirical distribution of the pairs $(E_i,S_i)$. Then we get:
\begin{align*}
\lim_{\e\downarrow0}\e^{-1}\left\{\ell\left((1-\e)\hat F_n+\e F_0\right)-\ell\left(\hat F_n\right)\right\}\le0,
\end{align*}
since $\hat F_n$ is the MLE. The limit exists because of the concavity of $\ell$. Evaluating this limit, we get:
\begin{align*}
\int\frac{F_0(s)-F_0(s-e)}{\hat F_n(s)-\hat F_n(s-e)}\,d\Q_n(e,s)\le1.
\end{align*}

This gives for a limit point $F$ of a subsequence $\hat F_{n_k}$, using Helly's compactness theorem,
\begin{align*}
\int \sum_{i=1}^{M_1+e}e^{-1}\frac{\{\bar F_0(i)-\bar F_0(i-e)\}^2}{F(i)-F(i-e)}\,dF_E(e)\le1,
\end{align*}
where $F_E$ is the (discrete) distribution function of $E$.

We also have:
\begin{align*}
\int \sum_{i=1}^{M_1+e}e^{-1}\{F(i)-F(i-e)\}\,dF_E(e)=1,
\end{align*}
since $F(i)=1$, $i>M_1$. Hence we get:
\begin{align}
\label{consistency_ineq}
\int \sum_{i=1}^{M_1+e}e^{-1}\left\{\frac{\{\bar F_0(i)-\bar F_0(i-e)\}^2}{F(i)-F(i-e)}+\{F(i)-F(i-e)\}\right\}\,dF_E(e)\le2.
\end{align}

Let $j\in\{1,\dots,M_1\}$. Then, if $0<j\le M_1/2$, there is an $e$ with positive $dF_E$-measure, such that $j-e<0$, and then
\begin{align*}
&\frac{\{\bar F_0(j)-\bar F_0(j-e)\}^2}{F(j)-F(j-e)}+F(j)-F(j-e)
=\frac{\{\bar F_0(j)\}^2}{F(j)}+F(j)>2\bar F_0(j),
\end{align*}
unless $F(j)=\bar F_0(j)$.

On the other hand, if $M_1/2<j\le M_1$, there is an $e$ with positive $dF_E$-measure, such that $j+e>M_1$, and then
\begin{align*}
&\frac{\{\bar F_0(j+e)-\bar F_0(j)\}^2}{F(j+e)-F(j)}+F(j+e)-F(j)
=\frac{\{1-\bar F_0(j)\}^2}{1-F(j)}+1-F(j)>2\{1-\bar F_0(j)\},
\end{align*}
unless $F(j)=\bar F_0(j)$.
This means that
\begin{align*}
&\int \sum_{i=1}^{M_1+e}e^{-1}\left\{\frac{\{\bar F_0(i)-\bar F_0(i-e)\}^2}{F(i)-F(i-e)}+\{F(i)-F(i-e)\}\right\}\,dF_E(e)\\
&>2\int \sum_{i=1}^{M_1+e}e^{-1}\{\bar F_0(i)-\bar F_0(i-e)\}\,dF_E(e)=2,
\end{align*}
unless $F=F_0$. The assertion now follows from (\ref{consistency_ineq}).
\end{proof}

We now get the following result where the intervals, containing the time of becoming symptomatic, consist of just one day.
\begin{theorem}
\label{th:local_limit_discrete}
Let $F_E$ have support $\{1,2,\dots,M_2\}$, with positive mass at each $i$ in this set. Let, given $E$, the infection time $I$ have a (continuous) uniform distribution on $[0,E]$, and let the time of getting symptomatic $S$ be the sum of the infection time $I$ and the incubation time $U$, where $I$ and $U$ are independent, given $E$. Let $U$ have an absolutely continuous distribution function $F_0$ on $\R$, such that $F_0(0)=0$ and $F_0(M_1)=1$, where $M_2>M_1/2$, and let $\bar F_0(i)$ be defined by
\begin{align}
\label{barF}
\bar F_0(i)=\int_{i-1}^i F_0(x)\,dx,\qquad i=1,2,\dots.
\end{align}
Moreover, suppose ${\mathcal T}=\{0,1,\dots,m\}$, where $m=M_1+1$, and $p_0(i)\stackrel{\text{\rm def}}=\bar F_0(i)-\bar F_0(i-1)>0$, for each $i=1,\dots,m$, where the $p_0(i)$ satisfy
\begin{align}
\label{sum_p_i}
\sum_{i=1}^m p_0(i)=1.
\end{align}
Furthermore, let ${\mathcal F}$ be the set of right-continuous discrete distribution functions only having jumps at the positive integers
and let $\hat F_n\in{\mathcal F}$ maximize the log likelihood
\begin{align}
\label{lof_likelihood_G}
\ell(F)=\sum_{i=1}^n \log \{F(S_i)-F(S_i-E_i)\},
\end{align}
over $F\in{\mathcal F}$. Finally, let $\hat p_n(i)=\hat F_n(i)-\hat F_n(i-)$ be the corresponding point masses.  Then:
\begin{enumerate}
\item[(i)]
\begin{align}
\label{local_limit_result_discrete}
n^{1/2}\left\{\bigl(\hat p_n(1),\dots,\hat p_n(m-1)\bigr)-\bigl(p_0(1),\dots,p_0(m-1)\bigr)\right\}\stackrel{{\mathcal D}}\longrightarrow N\left(\bm 0,\bm\Sigma^{-1}\right),
\end{align}
where $\bm\Sigma=(\s_{ij})_{i,j,=1,\dots,m-1}$ is the Fisher information matrix with elements
\begin{align}
\label{Fisher_inf_elements}
\s_{ij}=\E\,\frac{\bigl(1_{(S-E,S]}(i)-1_{(S-E,S]}(m)\bigr)\bigl(1_{(S-E,S]}(j)-1_{(S-E,S]}(m)\bigr)}{\{\bar F_0(S)-\bar F_0(S-E)\}^2}\,,
\end{align}
for $i,j=1,\dots,m-1$, and where we assume that $\bm\Sigma$ is nonsingular.
\item[(ii)] Let the covariance matrix $\bm\Sigma$ be defined by (\ref{Fisher_inf_elements}) be nonsingular and let $\hat F_n$ maximize (\ref{lof_likelihood_G}). Then:
\begin{align}
\label{local_limit_result_discrete_df}
n^{1/2}\left\{\bigl(\hat F_n(1),\dots,\hat F_n(m-1)\bigr)-\bigl(\bar F_0(1),\dots,\bar F_0(m-1)\bigr)\right\}\stackrel{{\mathcal D}}\longrightarrow N\left(\bm 0,{\bm A}\bm\Sigma^{-1}{\bm A}^T\right),
\end{align}
where the matrix $\bm A$ has rows $\sum_{j=1}^{i} {\bm e}_j^T$, $i=1,\dots,m-1$, for the unit vectors ${\bm e}_j\in\R^{m-1}$.
\end{enumerate}
\end{theorem}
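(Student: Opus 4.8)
The plan is to observe that, once Lemma \ref{lemma:consistency_single} has reduced the problem, maximizing (\ref{lof_likelihood_G}) is a regular \emph{finite-dimensional} parametric maximum-likelihood problem, so that (i) follows from the classical asymptotics of parametric MLEs and (ii) is a linear image of (i). First I would fix the parametrization. Since the $S_i,E_i$ are integers and $\hat F_n(i)=1$ for $i>M_1$ (shown in the proof of Lemma \ref{lemma:consistency_single}), the likelihood depends on $F$ only through the masses $p(1),\dots,p(m)$ at $1,\dots,m$, with $\sum_{i=1}^m p(i)=1$. I therefore treat the parameter as $\theta=(p(1),\dots,p(m-1))$ in the open simplex, with $p(m)=1-\sum_{i=1}^{m-1}p(i)$, and write one observation's contribution as $\log\{F(S)-F(S-E)\}=\log\{\sum_{j=1}^m 1_{(S-E,S]}(j)\,p(j)\}$. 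The true value $\theta_0=(p_0(1),\dots,p_0(m-1))$ lies in the interior, because every $p_0(i)>0$.

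Next I would compute the score. Differentiating in $p(i)$ and using $p(m)=1-\sum_{k<m}p(k)$ gives
\begin{align*}
\frac{\partial}{\partial p(i)}\log\{F(S)-F(S-E)\}=\frac{1_{(S-E,S]}(i)-1_{(S-E,S]}(m)}{F(S)-F(S-E)},\qquad i=1,\dots,m-1,
\end{align*}
whose denominator at $\theta_0$ equals $\bar F_0(S)-\bar F_0(S-E)$ and is bounded away from zero on the support, since any observable pair admits an integer $j\in(S-E,S]$ with $p_0(j)>0$. The factor $e^{-1}$ in $q_{F_0}$ and the marginal of $E$ enter the true log-density only through terms free of $\theta$; they drop out of the score but remain in the expectation over $(E,S)$, so the covariance of the score under the true model is precisely the matrix $\bm\Sigma$ of (\ref{Fisher_inf_elements}). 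Summing the indicators over the integer values of $S$ for fixed $E$ shows that each score component has mean zero, confirming correct specification at $\theta_0$.

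I would then run the standard regular-MLE argument. Lemma \ref{lemma:consistency_single} gives $\hat\theta_n\to\theta_0$, so with probability tending to one $\hat\theta_n$ is interior and solves the score equations; since the log-likelihood is smooth with derivatives bounded in a neighborhood of $\theta_0$ (the denominators stay positive), a Taylor expansion of the score around $\theta_0$, together with the central limit theorem for the score and the law of large numbers for the Hessian, yields $n^{1/2}(\hat\theta_n-\theta_0)\to N(\bm 0,\bm\Sigma^{-1})$, which is (i). For (ii), note that $\hat F_n(i)=\sum_{j\le i}\hat p_n(j)$, so the vector $(\hat F_n(1),\dots,\hat F_n(m-1))$ is the image under $\bmA$ of the mass vector; applying the linear map $\bmA$ to the limit in (i) gives the Gaussian limit with covariance $\bmA\bm\Sigma^{-1}\bmA^T$.

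The step needing the most care is the reduction together with eventual interiority: one must check that the maximization over all of $\mathcal F$ genuinely collapses to the finite simplex (the likelihood sees only the integer masses, and no mass escapes beyond $m$) and that consistency forces $\hat\theta_n$ into the interior, so that the unconstrained score equations, and hence the classical expansion, are legitimate. Once this is in place the remaining regularity is routine, since the family is finite and the relevant denominators are uniformly positive near $\theta_0$.
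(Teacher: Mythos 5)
Your proposal is correct and follows essentially the same route as the paper's own proof: parametrize by the point masses $p(1),\dots,p(m-1)$ with $p(m)=1-\sum_{k<m}p(k)$, compute the score $\bigl(1_{(S-E,S]}(i)-1_{(S-E,S]}(m)\bigr)/\{F(S)-F(S-E)\}$, argue that consistency forces the MLE to be an interior solution of the score equations for large $n$, and invoke standard parametric theory, with (ii) obtained as the linear image under $\bm A$. In fact you supply more detail than the paper (which compresses the interiority step into the remark that ``no isotonization is needed'' and then cites standard theory), so no gap remains.
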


\begin{remark}
\label{remark:singularity}
{\rm Note that $p_0(m)=1-\sum_{i=1}^{m-1}p_0(i)$ and $\bar F_0(m)=1$ and that we therefore have $m-1$ free parameters, as in the case of the multinomial distribution.
}
\end{remark}

\begin{remark}
\label{remark:finite_support}
{\rm The major difference with the conditions of Theorem 4.1 in \cite{piet:23} is that the maximum likelihood estimators have mass at points of a {\it a fixed finite set}. 
Also note that the estimation of only a fixed finite number of parameters pulls the rate of convergence from cube root $n$ to square root $n$.
}
\end{remark}

We can prove a similar result for the more general doubly interval censored case. This time  the Fisher information matrix consists of the elements
\begin{align*}
\s_{ij}=\E\,\frac{\{\psi(E,S_L,S_R,i)-\psi(E,S_L,S_R,m)\}\{\psi(E,S_L,S_R,j)-\psi(E,S_L,S_R,m)\}}{\left\{\int\psi(E,S_L,S_R,t)\,d\bar F_0(t)\right\}^2},
\end{align*}
for $i,j=1,\dots,m-1$, and where
\begin{align}
\label{psi}
\psi(e,s_L,s_E,t)&=(s_R-t)1_{\{t\in(0,s_R]\}}-(s_L-t)1_{\{t\in(0,s_L]\}}\nonumber\\
&\qquad\qquad-(s_R-e-t)1_{\{t\in(0,s_R-e]\}}+(s_L-e-t)1_{\{t\in(0,s_L-e]\}},
\end{align}

Note that
\begin{align*}
\int_{t\in (s_L,s_R]} \{F(t)-F(t-E)\}\,dt=\int \psi(e,s_L,s_E,t)\,dF(t),
\end{align*}
As noticed in Section \ref{section:computation}, exactly the same support reduction algorithm can be used, with the weights $w_i(j)=\psi(E_i,S_{L,i},S_{R,i},j)$  where $\psi$ is defined by (\ref{psi}), instead of the weights $w_i(j)=1_{(S_i-E_i,S_i]}(j)$ for the singly interval censored model (see (\ref{definition_w})).

It is harder to formulate conditions under which the nonparametric MLE is consistent in this model. In the sinlgly interval censored mdel we can use that we can identify values of $S_i-E_i$ for which $\hat F_n(S_i-E_i)=0$ and values of $S_i$ for which $\hat F_n(S_i)=1$. But if we have and interval $[S_{i,L},S_{i,R}]$ (with length larger than 1), containing the time point for getting symptomatic, this becomes more problematic. We therefore include the condition that $\hat F_n$ is consistent for $\bar F$ in our assumptions. 

We get the following analogue of Theorem \ref{th:local_limit_discrete} for the doubly interval censored model.

\begin{theorem}
\label{th:local_limit_discrete_double}
Let $F_0$ and $F_E$ be distributions with the properties defined in Theorem \ref{th:local_limit_discrete}, and let the time of getting symptomatic $S$ satisfy $S\in[S_L,S_R]$.  Moreover, let $\bar F_0$ and $p_0$   satisfy the same conditions as in Theorem \ref{th:local_limit_discrete}.
Finally, let $\hat F_n$ maximize the log likelihood
\begin{align}
\label{lof_likelihood_G2}
\ell(F)=\sum_{i=1}^n \log \int \psi(E,S_{L,i},S_{R,i},t)\,dF(t),
\end{align}
where $\psi$ is defined by (\ref{psi}), over the set of discrete distribution functions ${\mathcal F}$, defined in Theorem \ref{th:local_limit_discrete}, and let $\hat p_n(i)=\hat F_n(i)-\hat F_n(i-)$ be the corresponding point masses. Suppose $\hat F_n$ is consistent for $\bar F_0$.
Then:
\begin{enumerate}
\item[(i)]
\begin{align}
\label{local_limit_result_discrete_double}
n^{1/2}\left\{\bigl(\hat p_n(1),\dots,\hat p_n(m-1)\bigr)-\bigl(p_0(1),\dots,p_0(m-1)\bigr)\right\}\stackrel{{\mathcal D}}\longrightarrow N\left(\bm 0,\bm\Sigma^{-1}\right),
\end{align}
where $\bm\Sigma=(\s_{ij})_{i,j,=1,\dots,m-1}$ is the Fisher information matrix, assumed non-singular, with elements
\begin{align}
\label{Fisher_inf_elements2}
&\s_{ij}\nonumber\\
&=\E\,\frac{\{\psi(E,S_L,S_E,i)-\psi(E,S_L,S_E,m)\}\{\psi(E,S_L,S_E,j)-\psi(E,S_L,S_E,m)\}}{\left\{\int\psi(E,S_L,S_R,t)\,d\bar F_0(t)\right\}^2}\,,
\end{align}
for $i,j=1,\dots,m-1=M_1$, where $\psi$ is defined by (\ref{psi}).
\item[(ii)] Let the covariance matrix $\bm\Sigma$ be defined by (\ref{Fisher_inf_elements}). Then:
\begin{align}
\label{local_limit_result_discrete_df_double}
n^{1/2}\left\{\bigl(\hat F_n(1),\dots,\hat F_n(m-1)\bigr)-\bigl(\bar F_0(1),\dots,\bar F_0(m-1)\bigr)\right\}\stackrel{{\mathcal D}}\longrightarrow N\left(\bm 0,{\bm A}\bm\Sigma^{-1}{\bm A}^T\right),
\end{align}
where the matrix $\bm A$ has rows $\sum_{j=1}^{i} {\bm e}_j^T$, $i=1,\dots,m-1$, for the unit vectors ${\bm e}_j\in\R^{m-1}$.
\end{enumerate}
\end{theorem}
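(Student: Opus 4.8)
The plan is to reduce the statement to a classical finite-dimensional maximum-likelihood problem, following the proof of Theorem~\ref{th:local_limit_discrete} almost verbatim, the only changes being that the indicator weights $1_{(S-E,S]}(j)$ are everywhere replaced by the weights $\psi(E,S_{L},S_{R},j)$ of (\ref{psi}), and that consistency is now a hypothesis rather than something to be derived. Writing $\psi_j=\psi(E,S_{L},S_{R},j)$ and using $\int\psi(E,S_L,S_R,t)\,dF(t)=\sum_j\psi_j\,p(j)$ together with the constraint $\sum_{j=1}^m p(j)=1$, the contribution of one observation to the log likelihood (\ref{lof_likelihood_G2}) becomes
\[
\log\Bigl\{\psi_m+\sum_{j=1}^{m-1}(\psi_j-\psi_m)\,p(j)\Bigr\},
\]
a smooth concave function of the free parameter $\bm\theta=(p(1),\dots,p(m-1))$ on the open simplex. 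Because $p_0(i)>0$ for $i=1,\dots,m$, the true value $\bm\theta_0$ is interior, and $\int\psi\,d\bar F_0>0$ keeps the denominators bounded away from zero.

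First I would compute the per-observation score, the vector with components $(\psi_i-\psi_m)/\int\psi\,dF$, and record the information identity: at $\bm\theta_0$ the score has mean zero, and both its covariance and the negative expected Hessian equal the matrix $\bm\Sigma$ of (\ref{Fisher_inf_elements2}), assumed nonsingular. (Mean zero holds because the model is correctly specified with non-informative censoring, so the factors of the observation density not involving $F$ drop out of the $\bm\theta$-score.) Next, the assumed consistency gives $\hat{\bm\theta}_n\to\bm\theta_0$, so with probability tending to one $\hat{\bm\theta}_n$ is interior and solves the likelihood equations. A one-term Taylor expansion of these equations around $\bm\theta_0$, together with the law of large numbers for the empirical Hessian (converging to $-\bm\Sigma$) and the central limit theorem for the normalized score (converging to $N(\bm 0,\bm\Sigma)$), gives the sandwich $\bm\Sigma^{-1}\bm\Sigma\,\bm\Sigma^{-1}=\bm\Sigma^{-1}$ and hence
\[
n^{1/2}(\hat{\bm\theta}_n-\bm\theta_0)\stackrel{{\mathcal D}}\longrightarrow N(\bm 0,\bm\Sigma^{-1}),
\]
which is part (i). Part (ii) is then immediate by the delta method for a linear map: since $\hat F_n(i)=\sum_{j=1}^i\hat p_n(j)$, the vector $(\hat F_n(1),\dots,\hat F_n(m-1))$ is the image of $(\hat p_n(1),\dots,\hat p_n(m-1))$ under the matrix $\bm A$ with rows $\sum_{j=1}^i{\bm e}_j^T$, and applying $\bm A$ to the limit in (i) produces the limiting covariance $\bm A\bm\Sigma^{-1}\bm A^T$.

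The main obstacle is legitimizing the parametric reduction, that is, showing that $\hat F_n$ eventually places all its mass on $\{1,\dots,m\}$ with masses bounded away from the boundary of the simplex. In the singly censored case this came for free from Lemma~\ref{lemma:consistency_single}, where one checks directly that $\hat F_n(i)=1$ for $i>M_1$; with the trapezoidal weights $\psi$ there is no such observation-by-observation monotonicity, and nothing elementary prevents mass from leaking to integers beyond $m$. The route I would take is the Fenchel (support-reduction) optimality conditions for the nonparametric MLE: a point $k$ can carry mass only when $n^{-1}\sum_i\psi(E_i,S_{L,i},S_{R,i},k)/\hat q_i=1$, with $\hat q_i=\int\psi\,d\hat F_n$, whereas for $k>m$ the population version $\E[\psi(E,S_L,S_R,k)/\int\psi\,d\bar F_0]$ is strictly below one by the strict population optimality of $\bar F_0$ (identifiability); under the assumed consistency these constraints are therefore strict with probability tending to one, forcing $\hat p_n(k)=0$. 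Once the support is pinned to $\{1,\dots,m\}$ and the true masses are interior, the remaining regularity (differentiation under the expectation and dominated convergence for the Hessian) is routine, using the finiteness of the support and the strict positivity of the denominators.
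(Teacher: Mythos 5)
Your proposal is correct and takes essentially the same route as the paper: reduce to a finite-dimensional multinomial-type ML problem with the weights $\psi(E,S_L,S_R,j)$ replacing the indicators, compute the score and Hessian, invoke classical asymptotic normality (using the assumed consistency to get an interior solution of the likelihood equations), and obtain part (ii) by applying the linear map $\bm A$ to the limit in part (i). Your extra Fenchel-condition argument pinning the support of $\hat F_n$ to $\{1,\dots,m\}$ addresses a point the paper leaves implicit (it simply asserts that for large $n$ the score equations hold at the MLE, i.e.\ ``no isotonization is needed'', and appeals to standard theory), so your write-up is, if anything, more complete than the paper's.
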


\section{Computation of the nonparametric maximum likelihood estimators}
\label{section:computation}
In \cite{piet:21} two methods were discussed to compute the nonparametric MLE in the singly interval censored model: the EM algorithm and the iterative convex minorant algorithm. The EM algorithm is excruciatingly slow for this model and for this reason the iterative convex minorant algorithm was used in the simulations. But in the case of doubly interval censored data it is less clear how the iterative convex minorant algorithm should be used, although we could think of ways to apply it in this situation too. However, we will turn to a third method of computing the nonparametric MLE, the {\it support reduction algorithm}, see \cite{grojowe:08}. This method can be applied with equal ease to the two models.

We first discuss the support reduction algorithm for the singly interval censored model. The support reduction algorithm starts by specifying a grid of points $\cS=\{1,\dots,M_1+M_2\}$ which could be points of mass of the MLE. As an example, for the data set analyzed in \cite{piet:21}, one could take set of points $\cS=\{1,2,\dots,30\}$. We can also take the points $S_i$ and $(S_i-E_i)1_{\{S_i-E_i>0\}}$ because these are the points appearing in the log likelihood.

The log likelihood, divided by $n$, for this set of points can be written
\begin{align*}
&\ell(p_1,\dots,p_M)=n^{-1}\sum_{i=1}^n\log\left\{\sum_{j=1}^M p_j1_{\{j\in(S_i-E_i,S_i]\}}\right\},
\end{align*}
where $M=M_1+M_2+1$, $p_j\ge0$, and $\sum_{j=1}^M p_j=1$, and where the subset of locations $j$ of {\it strictly} positive mass $p_j$ have to be estimated. Introducing the notation
\begin{align}
\label{definition_w}
w_i(j)=1_{\{j\in(S_i-E_i,S_i]\}},
\end{align}
we can write the log likelihood, divided by $n$, for $\{p_1,\dots,p_M\}$ at $\{1,\dots,M\}$
\begin{align}
\label{discrete_loglikelihood}
\ell(p_1,\dots,p_M)=n^{-1}\sum_{i=1}^n\log\left\{\sum_{j=1}^{M} p_j w_i(j)\right\}
\end{align}
Turning the maximization problem into a minimization problem on the cone $\R_+^M$, with a Lagrange term to ensure that the solution satisfies $\sum_{i=1}^M p_i=1$, we get as our criterion function
\begin{align}
\label{criterion_function1}
\f(p_1,\dots,p_M)=-n^{-1}\sum_{i=1}^n\log\left\{\sum_{j=1}^{M} p_j w_i(j)\right\}
+\sum_{i=1}^M p_i-1
\end{align}
For this function  have the following lemma.

\begin{lemma}[Fenchel duality conditions for minimization on a cone]
\label{lemma:fenchel}
The function $\f$ in (\ref{criterion_function1}) is minimized on $\R_+^M$ if and only if
\begin{enumerate}
\item[(i)]
\begin{align}
\label{fenchel1}
\frac{\partial}{\partial p_j}\f(p_1,\dots,p_M)\ge0,\qquad j=0,\dots,M,
\end{align}
and
\item[(ii)]
\begin{align}
\label{fenchel2}
\sum_{j=1}^M p_j\frac{\partial}{\partial p_j}\f(p_1,\dots,p_m)=0.
\end{align}
\end{enumerate}
\end{lemma}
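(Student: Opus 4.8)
The plan is to recognize Lemma \ref{lemma:fenchel} as the first-order (Karush--Kuhn--Tucker) characterization of a minimizer of a convex function over the nonnegative orthant $\R_+^M$, and to prove it through the standard variational inequality for convex minimization over a convex cone. First I would verify that $\f$ in (\ref{criterion_function1}) is convex and differentiable on the set where all the linear forms $\sum_{j=1}^M p_j w_i(j)$ are strictly positive: each summand $-n^{-1}\log\{\sum_{j=1}^M p_j w_i(j)\}$ is a convex function of $p$, being the negative logarithm of a positive linear form, and the remaining term $\sum_{i=1}^M p_i-1$ is affine. I would take as given that a candidate minimizer $p^*$ satisfies $\sum_{j=1}^M p_j^* w_i(j)>0$ for every $i$, which is the natural requirement that each observation receives positive likelihood and which makes the partial derivatives in (\ref{fenchel1}) and (\ref{fenchel2}) well defined. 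The analytic backbone is the fact that for such a $\f$, a point $p^*\in\R_+^M$ is a global minimizer over the cone if and only if
\begin{align*}
\sum_{j=1}^M \frac{\partial}{\partial p_j}\f(p^*)\,(q_j-p_j^*)\ge 0,\qquad\text{for all }q\in\R_+^M.
\end{align*}
The ``if'' direction is immediate from the supporting-hyperplane inequality $\f(q)\ge\f(p^*)+\sum_{j=1}^M \frac{\partial}{\partial p_j}\f(p^*)(q_j-p_j^*)$, and the ``only if'' direction follows by differentiating $t\mapsto\f(p^*+t(q-p^*))$ at $t=0$ along the feasible segment.

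Next I would show that this variational inequality is equivalent to conditions (i) and (ii). For the forward implication, testing with $q=p^*+{\bm e}_j\in\R_+^M$ yields $\tfrac{\partial}{\partial p_j}\f(p^*)\ge0$, which is (\ref{fenchel1}); testing with $q=\bm 0$ gives $\sum_{j=1}^M \tfrac{\partial}{\partial p_j}\f(p^*)\,p_j^*\le0$, while testing with $q=2p^*$ gives the reverse inequality, so together they force $\sum_{j=1}^M \tfrac{\partial}{\partial p_j}\f(p^*)\,p_j^*=0$, which is (\ref{fenchel2}). For the converse, if (\ref{fenchel1}) and (\ref{fenchel2}) hold then for any $q\in\R_+^M$ the sum $\sum_{j=1}^M \tfrac{\partial}{\partial p_j}\f(p^*)\,q_j$ is nonnegative term by term, while $\sum_{j=1}^M \tfrac{\partial}{\partial p_j}\f(p^*)\,p_j^*=0$, so the variational inequality holds and $p^*$ is a minimizer by convexity.

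The main subtlety, and the step I would be most careful about, concerns the boundary of the cone: at coordinates with $p_j^*=0$ only the direction of increasing $p_j$ is feasible, so $\tfrac{\partial}{\partial p_j}\f(p^*)$ in (\ref{fenchel1}) must be read as the one-sided (right-hand) derivative, which exists by convexity, and the segment argument must use only feasible directions $q-p^*$ with $q\in\R_+^M$. I would also emphasize that (\ref{fenchel2}) is really the complementary-slackness statement $p_j^*\,\tfrac{\partial}{\partial p_j}\f(p^*)=0$ for each $j$: by (\ref{fenchel1}) every term in the sum is nonnegative, so a vanishing sum forces each term to vanish individually. This coordinatewise form is precisely what the support reduction algorithm of \cite{grojowe:08} exploits, alternately enforcing stationarity on the current support and checking (\ref{fenchel1}) off the support.
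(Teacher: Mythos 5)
Your proof is correct, but it takes a genuinely different route from the paper. You give a self-contained first-order argument: convexity of $\f$, the variational inequality $\sum_j \partial_j\f(p^*)(q_j-p_j^*)\ge 0$ for all $q\in\R_+^M$ as the characterization of a minimizer over the cone, and then the equivalence of that inequality with (\ref{fenchel1})--(\ref{fenchel2}) by testing with $q=p^*+{\bm e}_j$, $q=\bm 0$ and $q=2p^*$. The paper instead does two things in three sentences: it observes that minimizing minus the log likelihood (\ref{discrete_loglikelihood}) over the \emph{simplex} is equivalent to minimizing (\ref{criterion_function1}) (the criterion plus a Lagrange term with multiplier $\l=1$) over the \emph{cone} $\R_+^M$, and it then cites Fenchel's duality theorem (\cite{rockafellar:70}, Theorem 31.4) for the necessary and sufficient conditions. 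Your approach buys transparency: it needs no external theorem, it makes explicit the boundary issue (one-sided derivatives at coordinates with $p_j^*=0$) and the coordinatewise complementary-slackness reading of (\ref{fenchel2}), both of which the paper leaves implicit and both of which matter for the support reduction algorithm. The paper's approach buys brevity and, more importantly, records the simplex-to-cone reduction, which is the reason the Lagrange term appears with multiplier exactly $1$ in (\ref{criterion_function1}); your proof treats $\f$ as given and so does not address why minimizing it on the cone solves the original constrained likelihood problem. That omission is not a gap in the proof of the lemma as literally stated, but it is the ``key step'' the paper chooses to emphasize, so a complete account would combine your variational argument with that one-line reduction.
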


The algorithms mentioned (EM, convex minorant algorithm and support reduction algorithm) run until the conditions of Lemma 1 are satisfied up to a certain tolerance, for which we take $10^{-10}$. The EM algorithm tries to do this by simple iteration:
\begin{align*}
p_j'=p_j\left\{1-\frac{\partial}{\partial p_j}\f(p_1,\dots,p_M)\right\},\qquad j=1,\dots,M,
\end{align*}
(see (7) in \cite{piet:21}), the iterative convex minorant algorithm by introducing a quadratic approximation, parametrizing with the values of the distribution function instead of the point masses  (see \cite{piet:21}).

As in the iterative convex minorant algorithm, the support reduction algorithm  employs quadratic approximation, but the parametrization uses the point masses. Expanding the first two terms of the log likelihood at a fixed vector $\bm p^{(0)}=(p_1^{(0)},\dots,p_{M}^{(0)})$, we get:
\begin{align*}
&n^{-1}\sum_{i=1}^n\log\left\{\sum_{j=1}^{M} p_j w_i(j)\right\}
-n^{-1}\sum_{i=1}^n\log\left\{\sum_{j=1}^{M} p_j^{(0)} w_i(j)\right\}\\
&\approx n^{-1}\sum_{i=1}^M\frac{\sum_{j=1}^{M} \left(p_j-p_j^{(0)}\right) w_i(j)}{\sum_{j=1}^{M} p_j^{(0)} w_i(j)}-\tfrac12n^{-1}\sum_{i=1}^n\frac{\left\{\sum_{j=1}^{M} \left(p_j-p_j^{(0)}\right) w_i(j)\right\}^2}{\left\{\sum_{j=1}^{M} p_j^{(0)} w_i(j)\right\}^2}\,.
\end{align*}

We now use iterative minimization.
For fixed $m_0<M$ and a subset $\{j_1,\dots,j_{m_0}\}\subset \{1,\dots,M\}$, we minimize
\begin{align}
\label{criterion_LS}
&\frac1{2n}\sum_{i=1}^n\frac{\sum_{k=1}^{m_0} p_{j_k}^2 w_i(j_k)^2+2\sum_{k<\ell\le m_0}p_{j_k}p_{j_{\ell}}w_i(j_k)w_i(j_{\ell})}
{\left\{\sum_{j=1}^{M} p_j^{(0)} w_i(j)\right\}^2}\nonumber\\
&\qquad\qquad\qquad\qquad\qquad\qquad-\frac2n\sum_{i=1}^n\frac{\sum_{k=1}^{m_0} p_{j_k} w_i(j_k)}{\sum_{j=1}^{M} p_j^{(0)} w_i(j)}+\sum_{k=1}^{m_0} p_{j_k}.
\end{align}
as a function of $p_1,\dots,p_{m_0}$, where, at the start of the iterations
\begin{align*}
m_0=1\qquad\text{ and }\qquad p_j^{(0)}=\frac1M,\qquad j=1,\dots,M,
\end{align*}
and where the double sum in the first numerator vanishes if $m_0=1$.

Then we investigate if adding a point $j_{m_0+1}$ not in the set $\{j_1,\dots,j_{m_0}\}$ and minimizing (\ref{criterion_LS}) over $\{p_{j_1},\dots,p_{j_{m_0+1}}\}$, with $m_0$ replaced by $m_0+1$, leads to a smaller value of (2.1) with the extra point. This may lead to a solution with negative  $p_i$'s. In that case  we remove the point $k$ with the smallest value of $p_k<0$ and solve the least squares minimization problem for (\ref{criterion_LS}) again. It can be proved that this procedure does not remove the point just added again (see, e.g., \cite{mary_meyer:13} and \cite{grojowe:08}). If this solution gives again values $p_i<0$, we reduce the set further to a subset of $m_0-1$ points and solve the least squares minimization problem for (\ref{criterion_LS}) again with $m_0$ replaced by $m_0-1$, continuing until we find a solution with only positive $p_i$'s.

Then we repeat the whole procedure, starting by investigating  whether adding a point $j_{m_0'+1}$ not in the set $j_1,\dots,j_{m_0'}\}$ leads to a smaller value of the criterion for the new subset $\{j_1,\dots,j_{m_0'}\}\subset \{1,\dots,M\}$. Continuing in this way we find a subset $\{j_1,\dots,j_{m_0}\}$ and corresponding $p_{j_1},\dots,p_{j_{m_0}}$ which solves the least squares problem for all possible subsets $\{j_1,\dots,j_{m_0}\}\subset \{1,\dots,M\}$.

Next we change the values $p_j^{(0)}$ in the denominators of (\ref{criterion_LS}). Let
\begin{align*}
\bm p=(p_1,\dots,p_M),\qquad \bm p^{(0)}=\left(p_1^{(0)},\dots,p_M^{(0)}\right),
\end{align*}
where $\bm p$ consists of the values $p_{j_k}$ found in the iterative least squares minimization procedure and zeroes for indices $j_k$ not corresponding to indices of the subset $\{j_1,\dots,j_{m_0}\}$.
Using a line search procedure, for which we use Armijo's rule, we look for a convex combination
\begin{align*}
\bm p'=\a\bm p+(1-\a)\bm p^{(0)},\qquad \a\in(0,1),
\end{align*}
such that $\f(\bm p')<\f(\bm p^{(0)})$, where $\f$ is defined by (\ref{criterion_function1}). Then we set $\bm p^{(0)} := \bm p'$, and repeat the iterative least squares minimization procedure, described above.

We repeat these inner and outer iterations until conditions (i) and (ii) of Lemma \ref{lemma:fenchel} are satisfied up to a certain tolerance, for which we took $10^{-10}$.

As an example, the algorithm is applied to the data set, given in \cite{piet:21}, starting with $p_{i}^{(0)}=1/M$ at the points $\{1,\dots,M\}$, $M=31$, and $p_1=1$ at $t_1=10$.
\begin{align*}
\begin{array}{cccccccccc}
&\text{iteration} &\quad &\text{criterion} &\qquad &\min_{j:p_j>0}\frac{\partial}{\partial p_j}\f(\bm p) &\qquad &\left|\langle \bm p,\f'(\bm p)\rangle\right| &\quad &\#\{j:p_j>0\}\\
&  &    &  &	 &&	&  & &\\
    &1    &   &1.5042265478  & 	&-0.1222076701  &	&0.0650411285   & &7\\
    &2    &   &1.4607858577 &  	&-0.0245250080  &	 &0.0650411285  & &7\\
    &3  &    &1.4528857033  & 	&-0.0016619636  &	 &0.0008604701  & &7\\
    &4   &    &1.4523204985 &  	&-0.0000347676  &	&0.0000174294   & &7\\
    &5   &     &1.4522988585 &  &-0.0000003963  &	&0.0000001969   & &7\\
    &6  &     &1.4522974627 &  	&-0.0000000040  &	&0.0000000020   & &7\\
    &7  &    &1.4522973319  &	 &-0.0000000000 &	&0.0000000000   & &7
    \end{array}
\end{align*}

It is seen that after the first least squares iteration run, the algorithm has found $7$ points of strictly positive mass (the points 3 to 9) and that this number does not change in the following outer iterations. It is also clear that the outer iteration are of (quadratic) Newton type. The end solution coincides in all $10$ decimals with the result of the iterative convex minorant algorithm in \cite{piet:21}. It can be reproduced by running the {\tt R} script for the support reduction algorithm in \cite{github:20}.

The support reduction algorithm can be run in exactly the same way for the {\it doubly interval censored} data with more general intervals. The only change concerns the $w_j(1)$. This time $w_j(i)$ is defined by (\ref{psi}). The log likelihood is again given by (\ref{discrete_loglikelihood}), but with the new definition of the $w_j(i)$. The solution of the maximization problem on the set of points ${\mathcal S}=\{1,\dots,M\}$ is again characterized by Lemma \ref{lemma:fenchel}.

\section{Confidence intervals}
\label{section:conf_int}
To construct confidence intervals for the distribution function of the incubation time $\bar F_0$, discretized on the integers as in (\ref{parameters_discretized}), based on the nonparametric MLE for the singly and doubly interval censoring models, we can use Theorems \ref{th:local_limit_discrete} and \ref{th:local_limit_discrete_double}. Because the Weibull distribution is a popular tool for modeling the incubation time distribution in medical statistics, we use simulations from this distribution as our examples.

Since we have square root $n$ convergence and asymptotic normality, we can also use bootstrap confidence intervals. We do not run into the inconsistency difficulties from which the classical nonparametric bootstrap suffers in the continuous model (see \cite{piet:21} and \cite{piet:23}). Bootstrapping has the advantage that we do not have to estimate the asymptotic variances.

We start by considering asymptotic confidence intervals, using Theorems \ref{th:local_limit_discrete} and \ref{th:local_limit_discrete_double} for estimating the variance. If we want a 95\% confidence intervals for the distribution function at a fixed point $t$, we can use the interval
\begin{align}
\label{conf_int_CLS}
[\hat F_n(t)-1.96\,\hat\s_n(t)/\sqrt{n},\hat F_n(t)+1.96\,\hat\s_n(t)/\sqrt{n}],
\end{align}
where $\hat F_n$ is the nonparametric MLE and $\hat \s_n(t)$ is the square root of a diagonal element of the inverse  {\it observed} Fisher information matrix, corresponding to the Fisher information matrix of Theorems \ref{th:local_limit_discrete} and \ref{th:local_limit_discrete_double}.

The observed Fisher information matrix is defined by $\bm {F}=(f_{jk})$, where
\begin{align}
\label{Fisher_inf_elements_observed}
f_{jk}=n^{-1}\sum_{i=1}^n\frac{\bigl(1_{(S_i-E_i,S_i]}(j)-1_{(S_i-E_i,S_i]}(m)\bigr)\bigl(1_{(S_i-E_i,S_i]}(k)-1_{(S_i-E_i,S_i]}(m)\bigr)}{\{\hat F_n(S_i)-\hat F_n(S_i-E_i)\}^2}\,,
\end{align}
and the $j$ and $k$ are points of mass of the MLE $\hat F_n$. If $1_1,\dots,i_{\ell}$ are the indices of the points of mass, the $(\ell-1)\times (\ell-1)$ matrix ${\bm F}^{-1}$ is the inverse of the corresponding observed Fisher information matrix with elements $f_{jk}$, $j,k=i_1,\dots,i_{\ell-1}$. The matrix $\bm A{\bm F}^{-1}\bm A^T$, where the matrix $\bm A$ has rows $\sum_{j=1}^{i} {\bm e}_j^T$, $i=1,\dots,\ell-1$, for the unit vectors ${\bm e}_j\in\R^{\ell-1}$, is an estimator of the covariance matrix of $(\hat F_n(i_1),\dots,\hat F_n(i_{\ell-1}))$. So the diagonal elements of this matrix are the estimates of the variances of $\hat F_n(i_j)$, $j=1,\dots,\ell-1$.

\begin{figure}[!ht]
\begin{subfigure}[b]{0.45\textwidth}
\includegraphics[width=\textwidth]{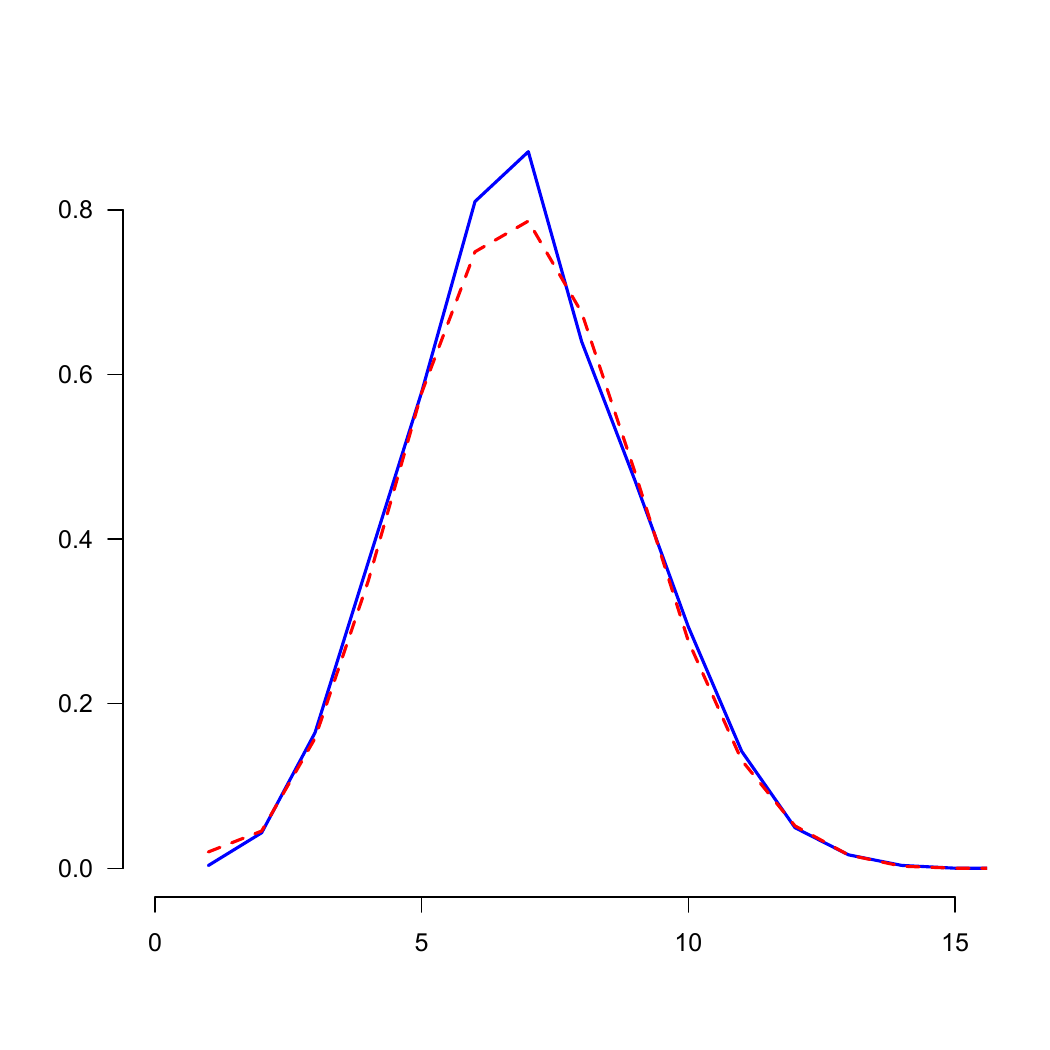}
\caption{}
\label{fig:variances10,000}
\end{subfigure}
\begin{subfigure}[b]{0.45\textwidth}
\includegraphics[width=\textwidth]{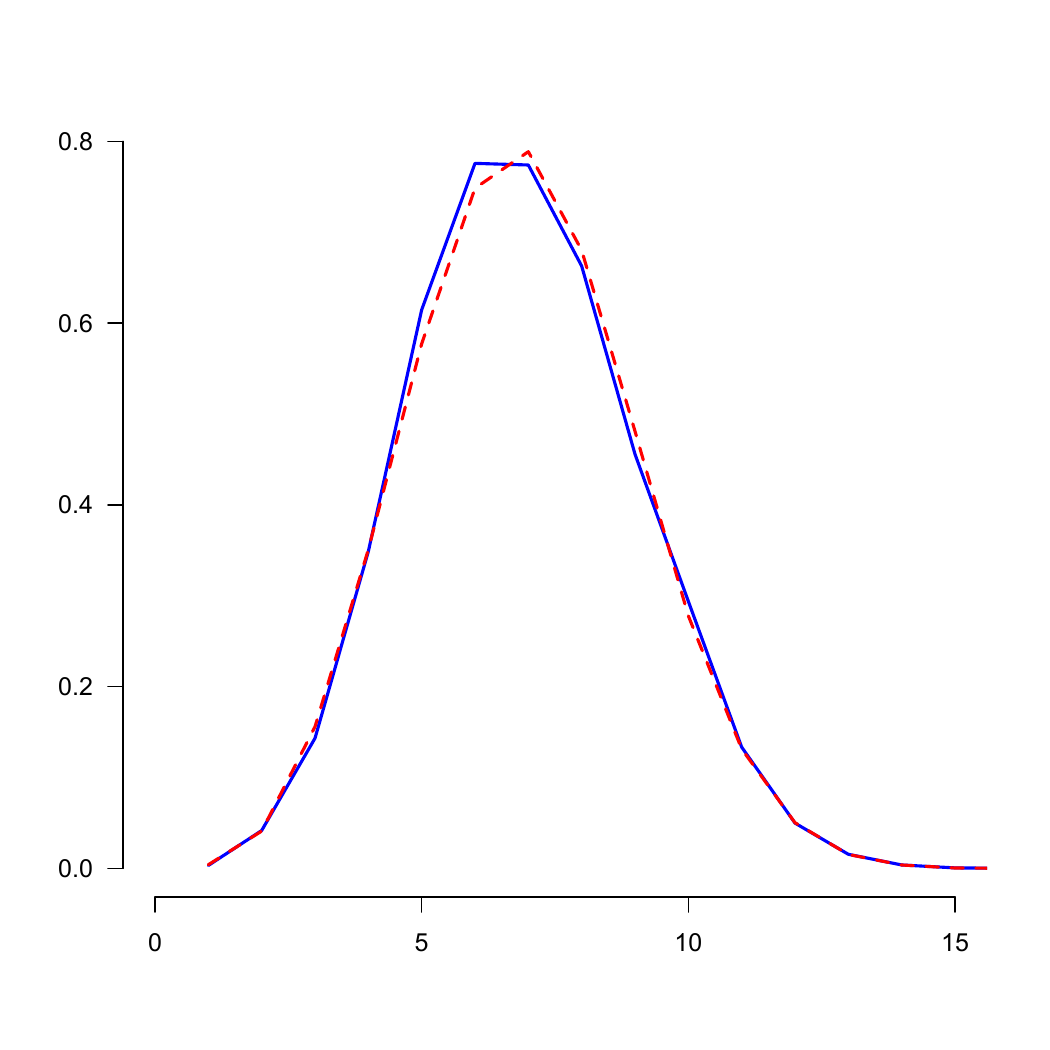}
\caption{}
\label{fig:variances10,000}
\end{subfigure}
\caption{Estimates of the variances of the $\hat F_n(i)$ by $n$ times the actual variances of 1000 samples (blue, solid) and by means of the inverses of the observed Fisher information matrices over the $1000$ samples (dashed, red), (a) for sample size $n=1000$ and (b) for sample size $n=10,000$. We used linear interpolation between the values at the points $1,2,\dots$.}
\label{figure:variances}
\end{figure}

We use these diagonal elements $D_{i_j}$as estimates of all variances, by defining
\begin{align}
\label{extended_diagonal}
D_k=\left\{\begin{array}{lll}
0,\qquad &,k<i_1,\\
D_{i_j} &,i_j\le k<i_{j+1}\,\qquad ,1\le j <\ell,\\
0, &,k\ge i_{\ell}.
\end{array}
\right.
\end{align}

The diagonal elements $D_i$ in the extended definition (\ref{extended_diagonal}), are used as estimates of the variances of $\hat F_n(1),\dots,\hat F_n(M_1)$.
The fit of the estimates with $n$ times the actual variances of the $\hat F_n(i)$ over $1000$ samples is remarkably good for our simulation model, see Figure \ref{figure:variances}.

The resulting confidence intervals for a sample of size $n=1000$ is shown in Figure \ref{figure:conf_int_CLT1000} at the points $3$ to $10$, where the MLE puts most of its mass. The coverage of these intervals is also shown. Here we generated $1000$ samples of size  $n=1000$, and computed the fraction of times the parameters $\bar F_0(i)=\int_{i-1}^{i}F_0(t)\,dt$ were inside the intervals (\ref{conf_int_CLS}) at the points 3 to 10.

\begin{figure}[!ht]
\begin{subfigure}[b]{0.45\textwidth}
\includegraphics[width=\textwidth]{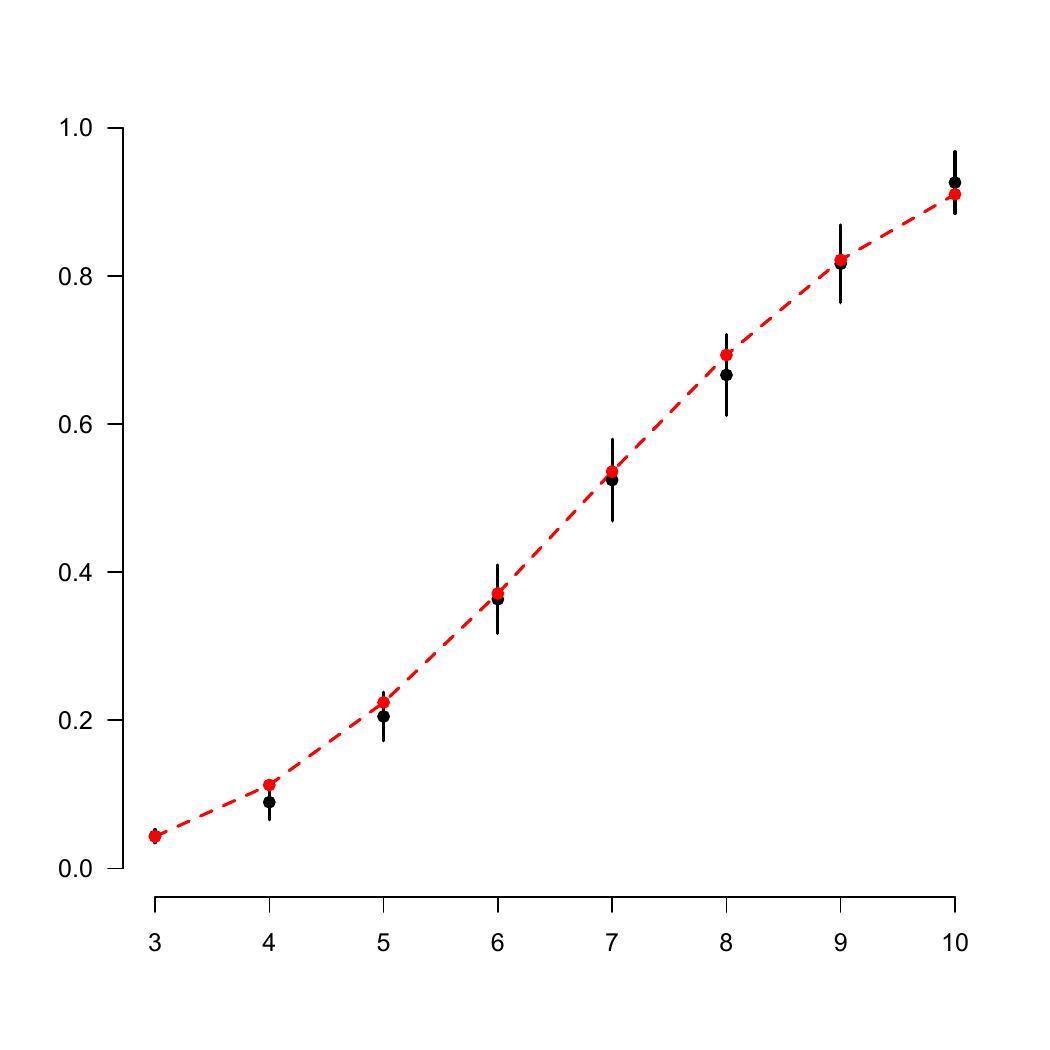}
\caption{}
\label{fig:percentages500_Weibull}
\end{subfigure}
\begin{subfigure}[b]{0.45\textwidth}
\includegraphics[width=\textwidth]{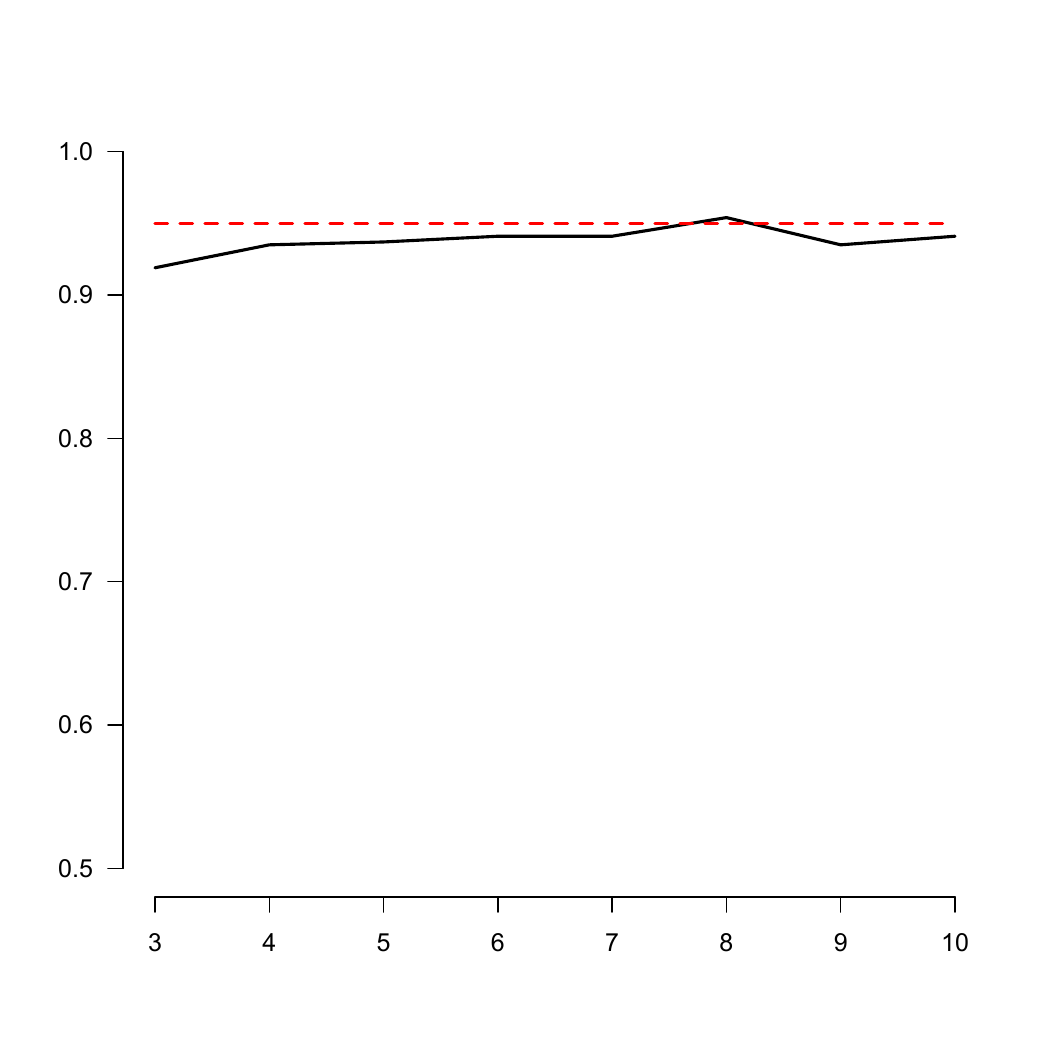}
\caption{}
\label{fig:percentages1000_Weibull}
\end{subfigure}
\caption{(a) 95\% confidence intervals in the singly interval censored model, using (\ref{conf_int_CLS}), for the values of $\bar F_0(i)=\int_{i-1}^{i} F_0(x)\,dx$ (red dots and linearly interpolated dashed red curve) at the points $3,4,\dots,10$ for a sample of size $n=1000$, where $F_0$ is the Weibull distribution function, with parameters $a=3.035$ and $b=0.0026$, truncated at $M_1=15$. The black dots are the values of $\hat F_n$ at these points. (b) Coverage percentages of the  95\% confidence intervals at the points $3,4,\dots,10$, using (\ref{conf_int_CLS}), for sample size $n=1000$.}
\label{figure:conf_int_CLT1000}
\end{figure}

We can run a bootstrap experiment to generate confidence intervals of this type in the following way. We resample with replacement from the data $(E_i,S_i)$ $1000$ samples of the same size $n$ and compute for each of these bootstrap samples the MLE. This gives $1000$ bootstrap values $\hat F_n^*(t)-\hat F_n(t)$. For these bootstrap values of $\hat F_n^*(t)-\hat F_n(t)$ we compute the $0.025$ and $0.975$ quantiles $Q_{0.025}^*(t)$ and $Q_{0.975}^*(t)$, respectively. This gives the bootstrap 95\% confidence intervals
\begin{align}
\label{conf_int_bootstrap}
[\hat F_n(t)-Q_{0.975}^*(t),\hat F_n(t)-Q_{0.025}^*(t)].
\end{align}
The results are shown in Figure \ref{figure:conf_int_single_bootstrap}. It is seen that the results are similar to the results of the method, using the inverse observed Fisher information matrix for generating the confidence intervals.

\begin{figure}[!ht]
\begin{subfigure}[b]{0.45\textwidth}
\includegraphics[width=\textwidth]{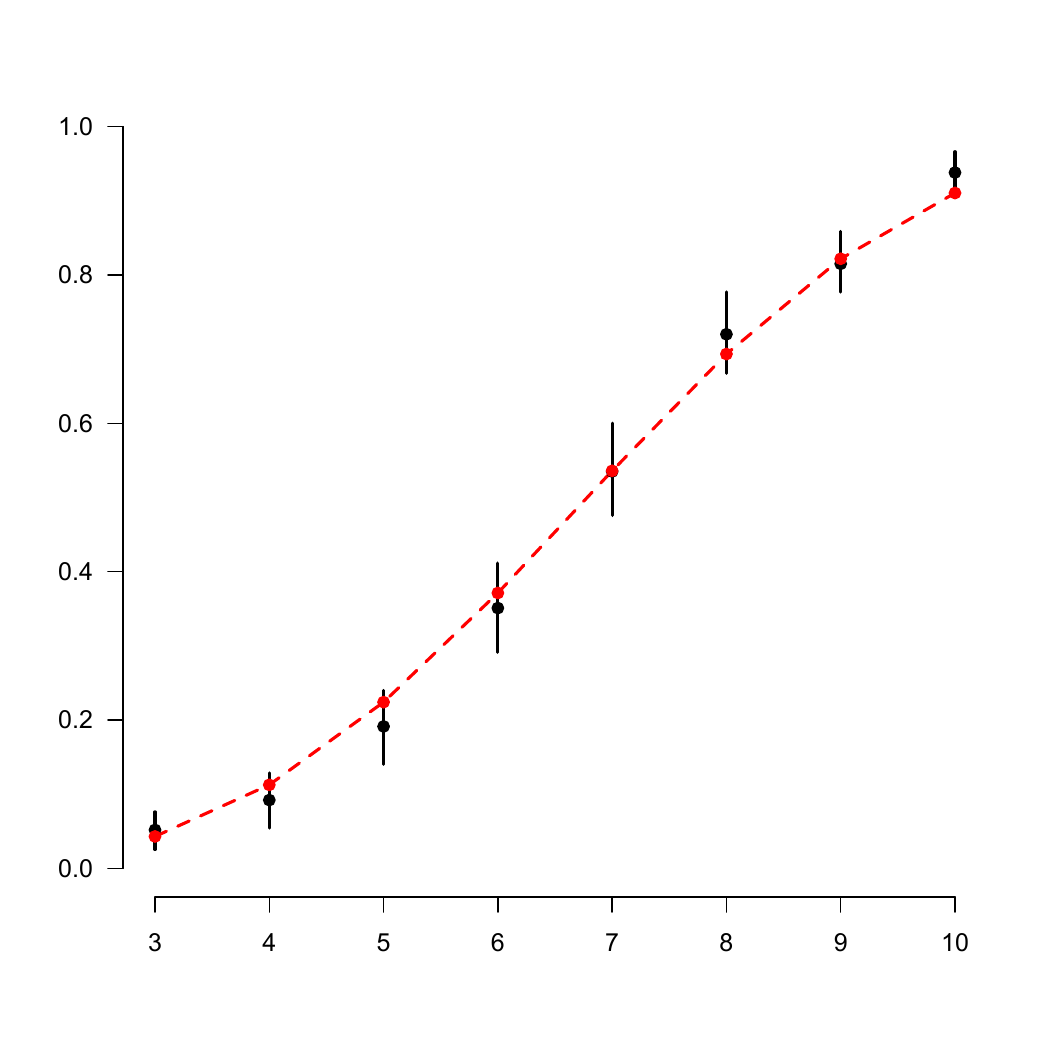}
\caption{}
\label{fig:CI_single1000_bootstrap}
\end{subfigure}
\begin{subfigure}[b]{0.45\textwidth}
\includegraphics[width=\textwidth]{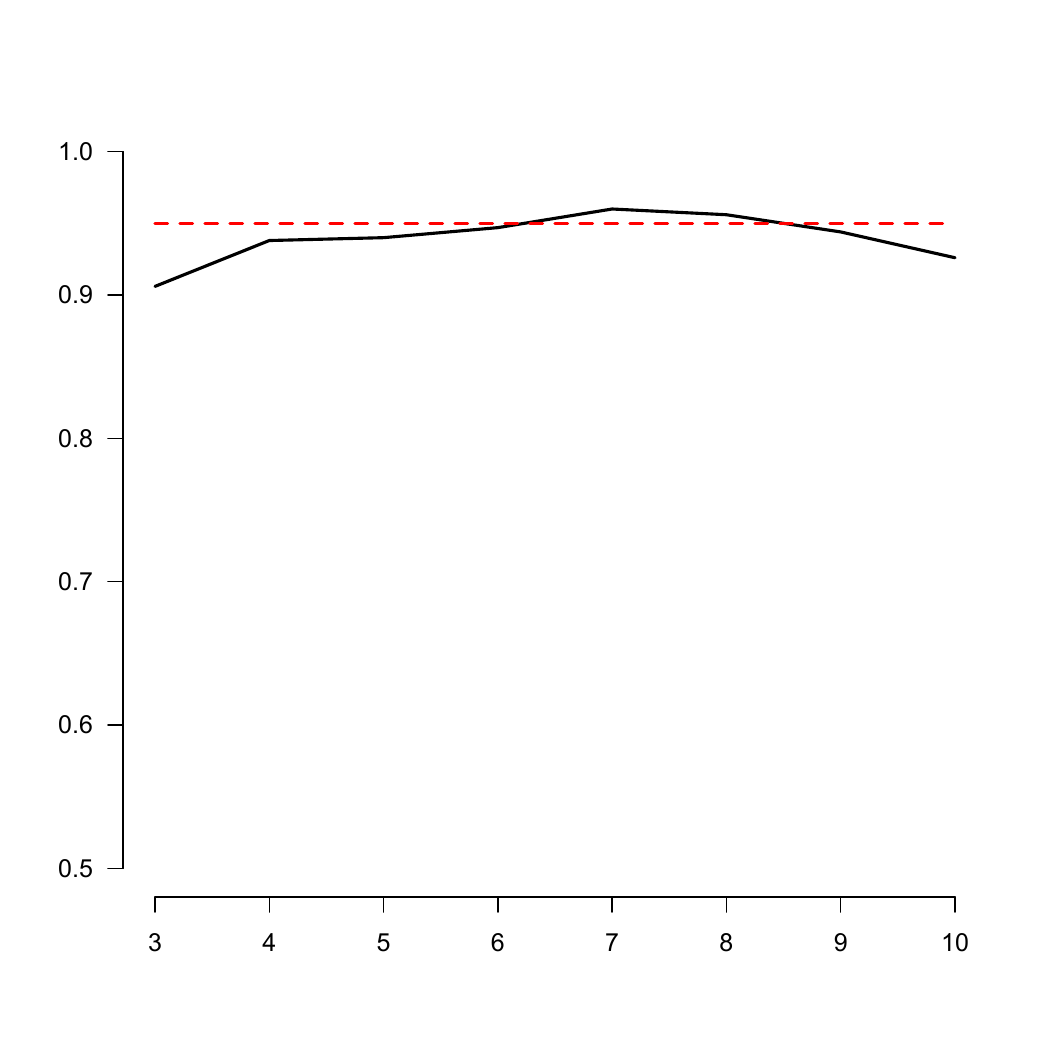}
\caption{}
\label{fig:percentages_single1000_bootstrap}
\end{subfigure}
\caption{(a) 95\% bootstrap confidence intervals in the singly interval censored model, using (\ref{conf_int_bootstrap}), for the values of $\bar F_0(i)=\int_{i-1}^{i}F_0(x)\,dx$  at the points $3,4,\dots,10$ (red dots and linearly interpolated dashed red curve) for a sample of size $n=1000$, where $F_0$ is the Weibull distribution function, truncated at $M_1=15$. The black dots are the values of $\hat F_n$ at these points. (b) Coverage percentages of the bootstrap 95\% confidence intervals at the points $3,4,\dots,10$, using (\ref{conf_int_bootstrap}), for sample size $n=1000$.}
\label{figure:conf_int_single_bootstrap}
\end{figure}

We also simulated data for the doubly censored model. Here we took $S_R$ discretely uniform on $\{\lceil S\rceil,\dots,\lceil S\rceil+3\}$ and $S_L$ discretely uniform on $\{\lfloor S\rfloor,\dots,\lfloor S\rfloor-3\}$ (replacing  $\lfloor S\rfloor-i$  by $0$ if $\lfloor S\rfloor-i<0$). This time the observed Fisher information matrix is defined by
\begin{align}
\label{Fisher_inf_elements_double}
f_{jk}=n^{-1}\sum_{i=1}^n\frac{\tilde \psi(E_i,S_{L,i},S_{R,i},j)\tilde \psi(E_i,S_{L,i},S_{R,i},k)}{\left\{\int\psi(E_i,S_{L,i},S_{R,i},t)\,d\hat F_n(t)\right\}^2}\,,
\end{align}
where
\begin{align*}
\tilde \psi(E_i,S_{L,i},S_{R,i},t)=\psi(E_i,S_{L,i},S_{R,i},t)-\psi(E_i,S_{L,i},S_{R,i},m),
\end{align*}
and where the $j$ and $k$ are points of mass of the MLE $\hat F_n$ and $\psi$ is defined by (\ref{psi}).  The diagonal elements $D_{i_j}$ of the matrix ${\bm A}{\bm F}^{-1}{\bm A}^T$ are extended as in (\ref{extended_diagonal}) and used as estimates of the  variances of the $\hat F_n(i)$.
 
\begin{figure}[!ht]
\begin{subfigure}[b]{0.45\textwidth}
\includegraphics[width=\textwidth]{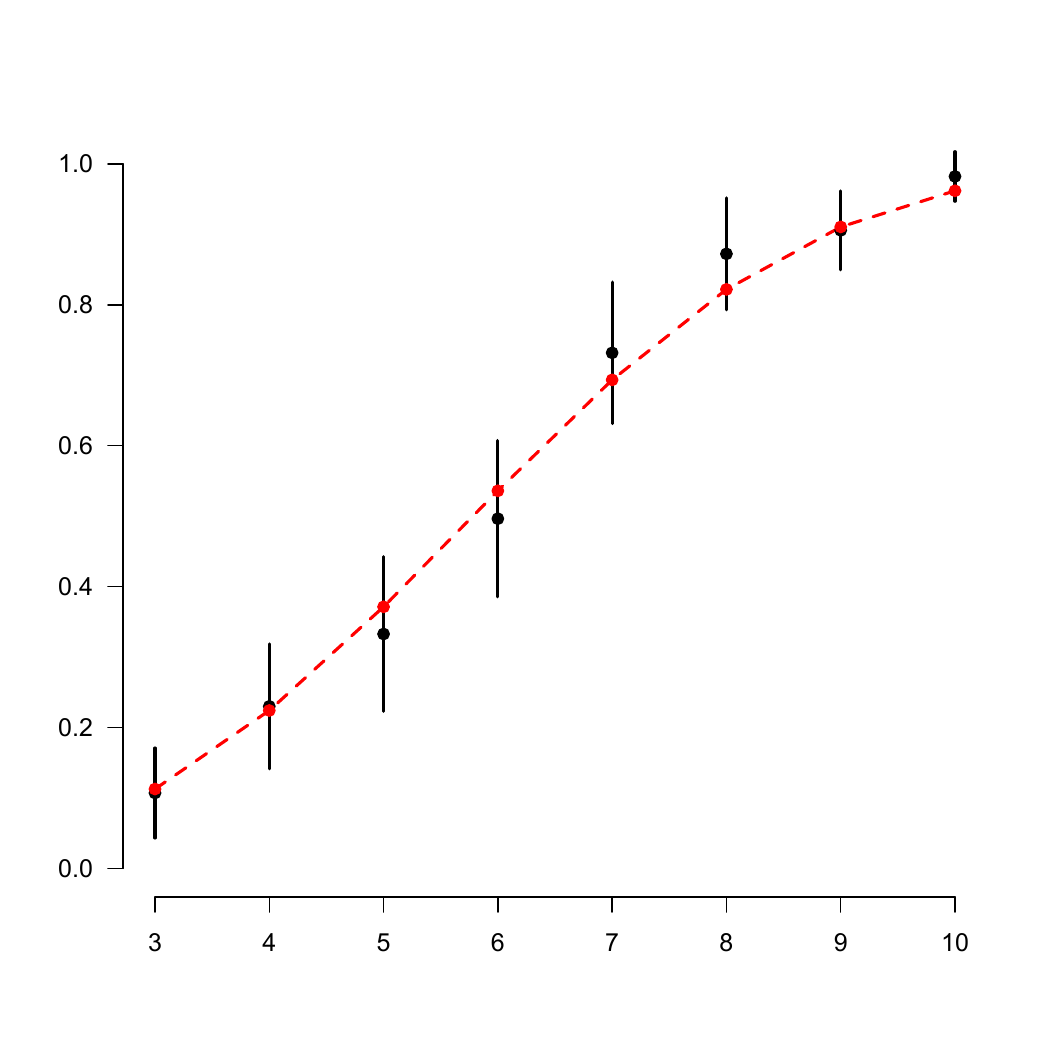}
\caption{}
\label{fig:Fisher_observed500}
\end{subfigure}
\begin{subfigure}[b]{0.45\textwidth}
\includegraphics[width=\textwidth]{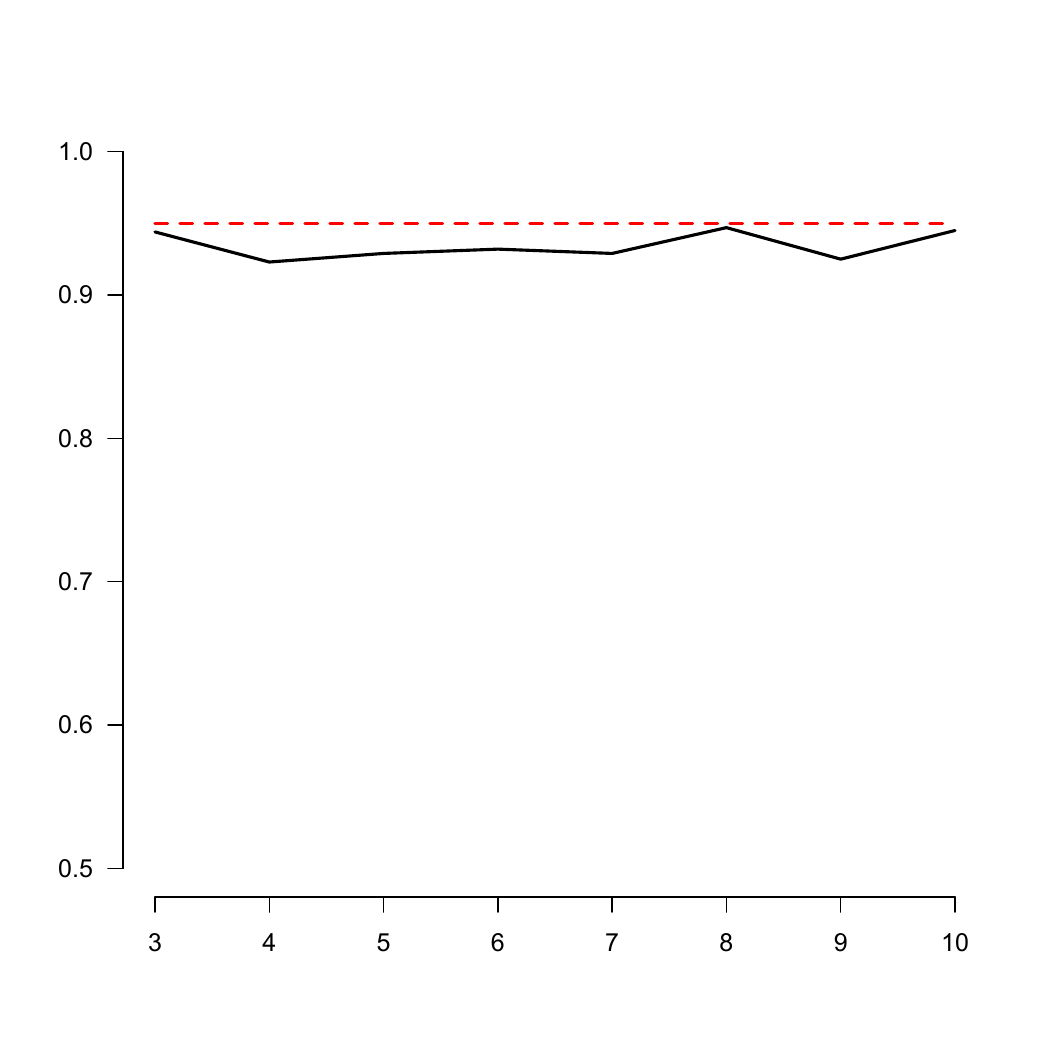}
\caption{}
\label{fig:Fisher_mean500_bootstrap_example}
\end{subfigure}
\caption{95\% confidence intervals for a doubly interval censored simulation sample at the points $3,\dots,10$, using (\ref{conf_int_CLS}), with the variances estimated by the means of the diagonal of the inverse observed Fisher information matrices over $1000$ samples. (b) Coverage percentages for the same example over $1000$ samples, using the same estimates of the variances. Sample size is $n=1000$.}
\label{figure:variances_Fisher_observed}
\end{figure}

The coverages, based on the Fisher information matrix from one sample were not very good this time, but were rather satisfactory if we estimate the Fisher information matrix by the mean of these matrices over $1000$ samples. See Figure \ref{figure:variances_Fisher_observed}. In practice we can take the mean over $1000$ bootstrap samples.

\section{Conclusion}
\label{section:conclusion}
We studied the nonparametric maximum likelihood estimator of the incubation time, which is such an important parameter in the Covid-19 pandemic. The incubation time in our model is part of the sum of the infection time $I$ and the incubation time $U$. Usually the only really (partly) observable quantity is the time of becoming symptomatic, which is given by $S=I+U$. So our variable of interest $U$ has to be pulled out of this sum by deconvolution. If $S$ is observable, one speaks of the {\it singly interval censored model} (the beginning of the incubation time lies in an interval which represents the exposure time and this is the singly interval censored part). It seems most reasonable to assume that the time till infection and the incubation time have continuous distributions, and one can analyze this model under the assumption that $S$ is exactly observable. In that case the MLE of the distribution function of the incubation time converges, under some conditions, at cube root $n$ rate to Chernoff's distribution, see \cite{piet:23}.

However, most of the time $S$ is not directly observable, but only an interval $[S_L,S_R]$ is available, which we know to contain $S$. Taking into account that the observations are usually rounded to days, one can restrict oneself to estimating the means over one day, represented by
\begin{align}
\label{integral_par}
\bar F_0(i)=\int_{i-1}^{i} F_0(t)\,dt,
\end{align}
if $F_0$ is the distribution function of the incubation time. Since this gives us a fixed bounded number of parameters, we can use classical theory, connecting maximum likelihood with the Fisher information, to derive asymptotic distribution theory for the maximum likelihood estimators of these parameters.

We applied the theory, developed in Section \ref{section:discrete} to construct confidence intervals, either by using Theorems \ref{th:local_limit_discrete} and \ref{th:local_limit_discrete_double} directly, or by using a bootstrap method. In a sense, this purely nonparametric method lies at the other extreme of the parametric methods. If one wants to estimate the density, one will have to use some kind of smoothing, as was done in \cite{piet:21}, which is an intermediate method that is still nonparametric and avoids the need to choose between several parametric models. 

The support reduction algorithm seems at present to be the most stable method to estimate the parameters. The computing of the MLE and the confidence intervals is implemented in \cite{github:20} and discussed in Section \ref{section:computation}. {\tt R} scripts for running the algorithms discussed in this paper are available at \cite{github:20}.

\section{Appendix. Proofs}
\label{section:appendix}
In contrast with the difficulties of the continuous model in \cite{piet:23}, the estimation of the parameters $\int_{i-1}^{i} F_0(t)\,dt$ seems rather standard, once we have figured out the relation between the continuous model and the discrete observations of the times of becoming symptomatic. We still have to deal with a deconvolution problem, which we do by using nonparametric maximum likelihood.

\begin{proof}[Proof of Lemma \ref{lemma:fenchel}]
\label{remark:lemma_fenchel}
The key step is to reduce the maximization on the simplex $\{\bm p=(p_1,\dots,p_m)\in\R_+^m:\sum_{i=1}^m p_i=1\}$ to maximization on the cone $\R_+^m$. One can check that minimizing minus the log likelihood (\ref{discrete_loglikelihood}) under the restriction $\sum_{i=1}^mp_i=1$ is equivalent to minimizing (\ref{criterion_function1}), which is the criterion function $+$ a Lagrange term with Lagrange multiplier $\l=1$, on $\R^m_+$. Then the necessary and sufficient conditions for the minimum follow from Fenchel's duality theorem, see \cite{rockafellar:70}, Theorem 31.4.
\end{proof}

\begin{proof}[Proof of Theorem \ref{th:local_limit_discrete}] The log likelihood is of the form
\begin{align*}
\ell(p_1,\dots,p_m)=\sum_{i=1}^n\log\sum_{j=1}^m p_j 1_{((S_i-E_i)_+,S_i]}(j),
\end{align*}
so we count the number of times the point of mass $j$ belongs to an interval $(S_i-E_i)_+,S_i]$, where $S_i$ and $E_i$ are integers, and multiply this with the probability $p_j$. We have
\begin{align}
\label{score_function}
&\frac{\partial}{\partial p_j}\ell\biggl(p_1,\dots,1-\sum_{k=1}^{m-1}p_k\biggr)\nonumber\\
&=\sum_{i=1}^n\frac{1_{((S_i-E_i)_+,S_i]}(j)-1_{((S_i-E_i)_+,S_i]}(m)}{\sum_{k=1}^m p_j 1_{((S_i-E_i)_+,S_i]}(k)}\,,\qquad j=1,\dots,m-1,
\end{align}
and
\begin{align*}
&\frac{\partial^2}{\partial p_j\partial p_{l}}\ell\biggl(p_1,\dots,1-\sum_{k=1}^{m-1}p_k\biggr)\\
&=-\sum_{i=1}^n\frac{\left\{1_{((S_i-E_i)_+,S_i]}(j)-1_{((S_i-E_i)_+,S_i]}(m)\right\}\left\{1_{((S_i-E_i)_+,S_i]}(l)-1_{((S_i-E_i)_+,S_i]}(m)\right\}}{\left\{\sum_{k=1}^m p_j 1_{((S_i-E_i)_+,S_i]}(k)\right\}^2},
\end{align*}
for $j,l=1,\dots,m-1$, using the convention $0/0=0$.

By the assumptions on $F_0$ and $F_E$, the variables $S_i$ and $(S_i-E_i)_+$ will be such that for large $n$, the score functions (\ref{score_function}) will be zero for $p_j=\hat p_j$, where $\hat{\bm p}=(\hat p_1,\dots,\hat p_m)$ is the MLE of $\bm p_0=(p_0(1),\dots,p_0(m))$ (that is: no isotonization is needed), and the result now follows from standard theory.
\end{proof}

\begin{proof}[Proof of Theorem \ref{th:local_limit_discrete_double}]
The proof is entirely similar to the proof of Theorem \ref{th:local_limit_discrete}, but this time the score functions are given by
\begin{align}
\label{score_function_double}
&\frac{\partial}{\partial p_j}\ell\biggl(p_1,\dots,1-\sum_{k=1}^{m-1}p_k\biggr)\nonumber\\
&=\sum_{i=1}^n\frac{\psi(E_i,S_{L,i},S_{R,i},j)-\psi(E_i,S_{L,i},S_{R,i},m)}{\sum_{k=1}^m\psi(E_i,S_{L,i},S_{R,i},t)\,p_k}\,,\qquad j=1,\dots,m-1,
\end{align}
where $\psi$ is defined by (\ref{psi}).

A key part of the treatment of the doubly censored case is the rewrite of the log likelihood for one observation, using the integration by parts
\begin{align*}
\int_{t\in (s_L,s_R]} \{F(t)-F(t-E)\}\,dt=\int \psi(e,s_L,s_E,t)\,dF(t),
\end{align*}
where $\psi$ is defined by (\ref{psi}).
\end{proof}

\bibliographystyle{imsart-nameyear}
\bibliography{cupbook}

\end{document}